\newtheorem{thm}{Theorem}[section]
\newtheorem{corollary}[thm]{Corollary}
\newtheorem{lemma}[thm]{Lemma}
\newtheorem{assumption}[thm]{Assumption}
\theoremstyle{remark}
\newtheorem{remark}[thm]{Remark}
\def\qed{{\hfill $\Box$ \bigskip}}
\def\Xint#1{\mathchoice
{\XXint\displaystyle\textstyle{#1}}%
{\XXint\textstyle\scriptstyle{#1}}%
{\XXint\scriptstyle\scriptscriptstyle{#1}}%
{\XXint\scriptscriptstyle\scriptscriptstyle{#1}}%
\!\int}
\def\XXint#1#2#3{{\setbox0=\hbox{$#1{#2#3}{\int}$}
\vcenter{\hbox{$#2#3$}}\kern-.5\wd0}}
\def\dashint{\Xint-}
\newcommand\aint{-\hspace{-0.38cm}\int}
\newcommand\cbrk{\text{$]$\kern-.15em$]$}}
\newcommand\opar{\text{\,\raise.2ex\hbox{${\scriptstyle
|}$}\kern-.34em$($}}
\newcommand\cpar{\text{$)$\kern-.34em\raise.2ex\hbox{${\scriptstyle |}$}}\,}
\def\<{\langle}
\def\>{\rangle}
\newcommand\bR{\mathbb{R}}
\newcommand\bM{\mathbb{M}}
\newcommand\fR{\mathbf{R}}
\newcommand\cA{\mathcal{A}}
\newcommand\cB{\mathcal{B}}
\newcommand\cF{\mathcal{F}}
\newcommand\cG{\mathcal{G}}
\newcommand\cI{\mathcal{I}}
\newcommand\cL{\mathcal{L}}
\newcommand\cQ{\mathcal{Q}}
\newcommand{\mysection}[1]{\section{#1}
\setcounter{equation}{0}}
\begin{document}

\title[LITTLEWOOD-PALEY INEQUALITY FOR HIGHER ORDER]
{Parabolic Littlewood-Paley inequality for a class of time-dependent  operators of arbitrary order, and applications to higher order stochastic PDE}

\author{Ildoo Kim}
\address{Department of Mathematics, Korea University, 1 Anam-dong, Sungbuk-gu, Seoul,
136-701, Republic of Korea} \email{waldoo@korea.ac.kr}

\author{Kyeong-Hun Kim}
\address{Department of Mathematics, Korea University, 1 Anam-dong,
Sungbuk-gu, Seoul, 136-701, Republic of Korea}
\email{kyeonghun@korea.ac.kr}
\thanks{The research of the second
author was supported by Basic Science Research Program through the
National Research Foundation of Korea(NRF) funded by the Ministry of
Education, Science and Technology (20110015961)}

\author{Sungbin Lim}
\address{Department of Mathematics, Korea University, 1 Anam-dong, Sungbuk-gu, Seoul,
136-701, Republic of Korea} \email{sungbin@korea.ac.kr}

\subjclass[2010]{42B25, 26D10, 60H15, 35G05, 47G30}

\keywords{Parabolic Littlewood-Paley inequality, Stochastic  partial differential  equations, Time-dependent high order operators, Non-local operators of arbitrary order}

\begin{abstract}
In this paper we prove a parabolic version of the Littlewood-Paley inequality for a class of time-dependent local and non-local operators of arbitrary order, and as an application we show this inequality gives a fundamental estimate for the $L_p$-theory of the stochastic partial differential equations.
\end{abstract}

\maketitle

\mysection{Introduction}

 The classical
Littlewood-Paley inequality says (see \cite{Ste}) that  for  any $p\in
(1,\infty)$ and $f\in L_p(\fR^d)$,
\begin{equation}
              \label{LP}
\int_{\fR^d} \left(\int^{\infty}_0 |\sqrt{-\Delta}\, e^{t\Delta}
f|^2dt\right)^{p/2} dx\leq N(p)\|f\|^p_p,
\end{equation}
where
$e^{t\Delta} f(x):=S_t f=p(t,\cdot)*f(\cdot)=\frac{1}{(4\pi t)^{d/2}}\int_{\fR^d} f(x-y)e^{\frac{-|y|^2}{4t}}dy$.
 In \cite{kr94,Kr01} Krylov  proved the following parabolic version,
 in which $H$ is a Hilbert space : for any $p\in [2,\infty),
-\infty\leq a<b\leq \infty$, $f\in L_p((a,b)\times \fR^d,H)$,
\begin{equation}
                      \label{eqn krylov}
 \|(\int^t_a|(\sqrt{-\Delta}e^{(t-s)\Delta} f)(s,x)|^2_{H}\,ds)^{1/2}\|^p_{L_p((a,b)\times \fR^d)} \leq
 N(p)
\||f|_{H}\|^p_{L_p((a,b)\times\fR^d)}.
\end{equation}
Some related works and the significance of the parabolic Littlewood-Paley inequality in the $L_p$-theory of stochastic PDEs will be discussed later.

If $f=f(x)$ and $H=\fR$ then  by (\ref{eqn krylov}) with $a=0$ and $b=2$,
\begin{align*}
&\int_{\fR^d}[\int^1_0 |\sqrt{-\Delta}e^{s\Delta}f|^2
ds]^{p/2}dx
\\&\leq \int_{\fR^d}\int^2_1[\int^t_0 |\sqrt{-\Delta}e^{(t-s)\Delta}f|^2
ds]^{p/2}dtdx \leq 2N(p)\|f\|^p_{L_p(\bR^d)}.
\end{align*}
This and the scaling $(\sqrt{-\Delta} S_t f(c\cdot))(x)=\sqrt{-\Delta}(cS_{c^2t}f)(cx)$ yield (\ref{LP}).
 Hence   (\ref{eqn krylov})  is  a generalization of (\ref{LP}).
 Note that by putting $K_0(t,x)=\sqrt{-\Delta} p(t,x)$, we get $\sqrt{-\Delta}e^{(t-s)\Delta}f=K_0(t-s,\cdot)*f(s,\cdot)$ and therefore (\ref{eqn krylov}) becomes
\begin{equation}
         \label{eqn 2.21.1}
\|(\int^t_a|K_0(t-s,\cdot)*f(s,\cdot)(x)|^2_H
ds)^{1/2}\|^p_{L_p((a,b)\times\fR^d)}\leq N\||f|_{H}\|^p_{L_p((a,b)\times\fR^d)}.
\end{equation}
In this article we extend (\ref{eqn 2.21.1}) to a class of  time-dependent operators. For a wide class of differential operators $A(t)$ with symbol $\psi(t,\xi)$, one can define the kernel
\begin{equation*}
       \label{ker}
p(t,s,x)=p_A(t,s,x)=\cF^{-1}( \exp (\int_s^t \psi(r,\xi))dr))(x)
\end{equation*}  so that the solution of
$$
u_t=A(t)u +f, \quad u(0)=0
$$
is given by
$$
u=\int^t_0 p(t,s,\cdot)*f(s,\cdot) ds.
$$
  We provide a classification of operators $A(t)$  for which  (\ref{eqn 2.21.1})
holds with formally
\begin{equation*}
                  \label{eqn 2.24.1}
                  K_A(t,s,x)=\sqrt{-A(t)}p(t,s,x).
\end{equation*}
More generally, we provide sufficient conditions on measurable functions $K(t,s,x)$ on $\fR^{d+2}$ so that
\begin{equation}
                      \label{eqn our}
 \|(\int^t_a|K(t,s,\cdot)*f(s,\cdot)(x)|^2_H
ds)^{1/2}\|^p_{L_p((a,b)\times\fR^d)}\leq N\||f|_{H}\|^p_{L_p((a,b)\times\fR^d)}
\end{equation}
holds  for any $f\in C^{\infty}_0(\fR^{d+1},H)$ with constant $N$  independent of $f, a$ and $b$.
The functions $K(t,s,x)$ are  assumed to satisfy the conditions
described in Assumptions \ref{assumption1} and \ref{assumption2}.

For concrete examples we introduce the operators
$A_1(t)$ of $2m$-order ($m=1,2,3,\cdots$) and $A_2(t)$ of order
$\gamma\in (0,\infty)$
\begin{equation}
                                                 \label{eqn 2.2.1}
A_1(t)u:=(-1)^{m-1} \sum_{|\alpha|=|\beta|=m}a^{\alpha
\beta}(t)D^{\alpha+\beta}u, \quad \quad
 A_2(t)u:=-a(t)(-\Delta)^{\gamma/2}
\end{equation}
where $-a(t)(-\Delta)^{\gamma/2}$ is the operator with symbol $-a(t)|\xi|^{\gamma}$ and the coefficients $a(t)$ and $a^{\alpha \beta}(t)$ are
bounded  measurable in $t$  and satisfy the ellipticity conditions
$$
0<\nu< \Re[a(t)]<\nu^{-1},
$$
and
\begin{align*}
\nu |\xi|^{2m} \leq  \sum_{|\alpha|=|\beta|=m}  \xi^\alpha \xi^\beta
\Re\left[a^{\alpha \beta}(t)\right]  \leq \nu^{-1} |\xi|^{2m}.
\end{align*}
Here  $\Re [z]$ is the real part of $z$. Let $p_1(t,s,x)$ and $p_{2}(t,s,x)$ be the kernels related to $A_1(t)$ and $A_{2}(t)$ respectively. We prove that  (\ref{eqn our}) holds with
$$
K_1(t,s,x):=D^m p_1(t,s,x), \quad K_2(t,s,x):=(-\Delta)^{\gamma/4}p_2(t,s,x).
$$
Letting   the function $f$
depend only on $x$, one can obtain elliptic versions of these
results. For instance, we have for any $\gamma \in (0,\infty)$ and
$f\in L_p(\fR^d)$,
\begin{equation*}
\int_{\fR^d} \left(\int^{\infty}_0 |(-\Delta)^{\gamma/2}
e^{-t(-\Delta)^{\gamma}} f|^2dt\right)^{p/2} dx\leq
N(p,\gamma)\|f\|^p_p, \quad \quad \forall\, \gamma\in (0,\infty),
\end{equation*}
which is an extension of    (\ref{LP}), the classical  (elliptic) Littlewood-Paley inequality.

Among  many other examples of (\ref{eqn our}) are  the product $A_1(t)A_2(t)$ and $(-\Delta)^k \mathcal{L}_0(t)$ ($k=0,1,2,\cdots)$, where
\begin{equation}
           \label{c_0}
\mathcal{L}_0(t)u=\int_{\fR^d} \Big(u(x+y)-u(x)-\chi(y)(\nabla u(x),y) \Big) m(t,y) \frac{dy}{|y|^{d+\gamma}},
\end{equation}
 $\gamma \in (0,2)$, $\chi(y)=I_{\gamma>1}+I_{\gamma=1}I_{|y|\leq 1}$, and $m(t,y)\geq 0$ satisfies a certain condition described in Corollary \ref{maintheorem4}. Note that if $m(t,y)\equiv1$ then $\mathcal{L}_0=-(-\Delta)^{\gamma/2}$.

\vspace{2mm}

One of important applications of the parabolic Littlewood-Paley
inequality is  the theory of stochastic partial differential
equations of the type
\begin{equation}
                 \label{eqn 0}
 du=A_i(\omega,t) u \,\,dt +\sum_{k=1}^{\infty}f^k\,\,dw^k_t,
\quad u(0,x)=0.
\end{equation}
 Here
 $f=(f^1,f^2,\cdots)$ is an
$\ell_2$-valued random function depending on $(t,x)$, and $w^k_t$
are independent one-dimensional Wiener processes defined on a
probability space $(\Omega,P)$. The operators $A_i=A_i(\omega,t)$
are defined in (\ref{eqn 2.2.1}), but this
time we allow the coefficients $a(\omega,t)$ and $a^{\alpha \beta}(\omega,t)$ to depend
also on $\omega\in \Omega$. It turns out that if
$f=(f^1,f^2,\cdots)\in L_p(\Omega\times (0,\infty)\times \fR^d, \ell_2)$
satisfies a certain  measurability condition, the solutions of these
problems are given by
\begin{equation}
                                   \label{eqn 222}
u_i(t,x)=\sum_{k=1}^{\infty}\int^t_0 p_i(t,s,\cdot)*f^k(s,\cdot)(x)
dw^k_s,  \quad i=1,2
\end{equation}
where $p_i(t,s,x)$ are introduced above, but
they are random  due to the randomness of the coefficients.
The derivation of formula (\ref{eqn 222}) can be found in
\cite{Kr99} when $A_i=\Delta$, and by repeating the arguments in
\cite{Kr99} one can derive (\ref{eqn 222}) for such $A_i$. By
Burkholder-Davis-Gundy inequality (see \cite{kr95}), we have
\begin{align}
\|D^mu_1 &(t,\cdot)\|^p_{L_p(\Omega\times (0,\infty)\times \fR^d)}\nonumber\\
&\leq
 N(p)\|[\int^t_0|D^mp_1(t,s,\cdot)*f(s,\cdot)(x)|^2_{\ell_2}ds ]^{1/2}
 \|^p_{L_p(\Omega\times (0,\infty)\times \fR^d)}.     \label{eqn 333}
\end{align}
 The corresponding inequality  for
$u_2$ also holds with  $p_2$ and $(-\Delta)^{\gamma/4}$ in place of $p_1$ and $D^m$ respectively.
Actually if $f$ is not random, then $u_1$ and $u_2$  become  Gaussian processes
and  the reverse inequalities also hold. Thus to prove
$$
 D^mu_1, \,\,(-\Delta)^{\gamma/4}u_2\,\,\in
L_p(\Omega\times (0,\infty)\times \fR^d)
$$ and to get a legitimate start
of the $L_p$-theory of stochastic PDEs of type (\ref{eqn 0}), one has to
estimate the right-hand side of (\ref{eqn 333}).
Obviously (\ref{eqn our}) with $K_1$ and (\ref{eqn 333}) imply
\begin{equation}
               \label{expectation wise}
\|D^mu_1(t,\cdot)\|^p_{L_p(\Omega\times (0,\infty)\times \fR^d)}\leq N(p,m)
 \||f|_{\ell_2}\|^p_{L_p(\Omega\times (0,\infty)\times \fR^d)}.
\end{equation}
Using (\ref{expectation wise})  and
following the ideas in \cite{Kr99}, one can
 construct an $L_p$-theory of the general $2m$-order stochastic PDEs. Similarly one can construct an $L_p$-theory of stochastic PDEs with the operator $A_2(\omega,t)$.
  We acknowledge that if the coefficients $a^{\alpha \beta}$ are independent of $t$ then  inequality (\ref{expectation wise}) for high order stochastic PDEs is also introduced in \cite{NVW}  on the basis of $H^\infty$-functional calculus which is far different from our approach.
One of advantages of our approach is that no regularity condition of the coefficients with respect to time variable is required.

\vspace{2mm}

Below is a short description on the related works. As mentioned
above parabolic Littlewood-Paley inequality related to the Laplacian
$\Delta$ was first proved by Krylov in \cite{kr94,Kr01}. This result
is considered as a foundation of the $L_p$-theory of the
second-order stochastic partial differential equations.  Recently the parabolic Littlewood-Paley inequality was proved for the fractional Laplacian $(-\Delta)^{\gamma/2}$, $\gamma\in (0,2)$, in \cite{CL,IK}, and a slight extension of the result of \cite{CL,IK} was   made  to the operator $\mathcal{L}_0(t)$ in \cite{MP} and  to the operator with symbol $-\phi(|\xi|^2)$ in \cite{IPK}, where $\mathcal{L}_0(t)$ is from (\ref{c_0}) and $\phi$ is a Bernstein function satisfying
$$
c^{-1} \lambda^{\delta_1}\phi(t)\leq \phi(\lambda t)\leq c \lambda^{\delta_2}\phi(t), \quad \quad \forall \,\, \lambda, t\geq 1,
$$
$$
\phi(\lambda t)\leq c\lambda^{\delta_3}\phi(t), \quad \quad  \forall \,\,\lambda, t\leq 1
$$
with some constants $c>1$,  $0<\delta_1\leq \delta_2<1$ and $\delta_3\in (0,1]$.  The operators considered in \cite{CL,IK,IPK,MP} are of  order  less than $2$, they (except $\mathcal{L}_0(t)$) do not depend on $t$. The novelty of this article is that it extends existing results which have been proved for lower order operators independent of $t$ to the
time-dependent local and non-local operators of arbitrary order.

\vspace{2mm}

Next we briefly describe our approach to prove (\ref{eqn our}).  We estimate the sharp function of  $(\int^t_a|K(t,s,\cdot)*f(s,\cdot)(x)|^2_H
ds)^{1/2}$ in terms of the maximal function of $|f|_{H}$, then apply Fefferman-Stein theorem and Hardy-Littlewood maximal theorem. The operators considered in \cite{CL,IK,IPK} are the infinitesimal generators of  certain L\'evy processes, and the related  kernels $p(t,x)$ are transition densities of these processes. Thus to estimate the sharp function of $(\int^t_a|K(t,s,\cdot)*f(s,\cdot)(x)|^2_H
ds)^{1/2}$, appropriate bounds of the transition densities can be used as in \cite{CL,IK,IPK}. But for high order operators there is no such related L\'evy process and this method can not be applied. Instead, we modify the idea in \cite{MP} and  make a good use of Parseval's identity which enables us to avoid using estimates of the kernels related to the operators.

\vspace{2mm}

Finally we introduce some notation used in the article. As usual $\fR^{d}$ stands for the Euclidean space of points
$x=(x^{1},...,x^{d})$,  $B_r(x) := \{ y\in \fR^d : |x-y| < r\}$  and
$B_r
 :=B_r(0)$.
 For  multi-indices $\alpha=(\alpha_{1},...,\alpha_{d})$,
$\alpha_{i}\in\{0,1,2,...\}$, $x \in \fR^d$, and  functions $u(x)$ we set
$$
 u_{x^{i}}=\frac{\partial u}{\partial x^{i}}=D_{i}u,\quad \quad
D^{\alpha}u=D_{1}^{\alpha_{1}}\cdot...\cdot D^{\alpha_{d}}_{d}u,
$$
$$
x^\alpha = (x^1)^{\alpha_1} (x^2)^{\alpha_2} \cdots (x^d)^{\alpha_d},\quad \quad
|\alpha|=\alpha_{1}+\cdots+\alpha_{d}.
$$
We also use $D^m_x$ to denote a partial derivative of order $m$  with respect to $x$.
For an open set $U \subset \fR^d$ and a nonnegative integer $n$, we write $u \in C^n(U)$  if $u$ is $n$-times continuously differentiable in $U$. By $C^n_0(U)$ (resp. $C^{\infty}_0(U)$) we denote the set of all functions in $C^n(U)$ (resp. $C^{\infty}(U)$) with compact supports.
The standard $L_p$-space on $U$ with Lebesgue measure is denoted by $L_p(U)$.
Similarly, by $C_0^\infty(\fR^d, H)$ we denote the set of $H$-valued infinitely differentiable functions with compact support.
We use  ``$:=$" to denote a definition.  $a \wedge b = \min \{ a, b\}$,  $a \vee b = \max \{ a, b\}$ and   $\lfloor a \rfloor$ is the biggest integer which is less than or equal to $a$.
By $\cF$ and $\cF^{-1}$ we denote the Fourier transform and the inverse Fourier transform, respectively. That is,
$\cF(f)(\xi) := \int_{\fR^{d}} e^{-i x \cdot \xi} f(x) dx$ and $\cF^{-1}(f)(x) := \frac{1}{(2\pi)^d}\int_{\fR^{d}} e^{ i\xi \cdot x} f(\xi) d\xi$.
For a Borel
set $X\subset \fR^d$, we use $|X|$ to denote its Lebesgue
measure and by $I_X(x)$ we denote  the indicator of $A$.
For a sequence $a= (a_1,a_2,a_3,\ldots)$, we define $|a|_{\ell_2} = \big( \sum_{k=1}^\infty a_k^2 \big)^{1/2}$.
If we write $N=N(a, \ldots, z )$,
this means that the constant $N$ depends only on $a, \ldots , z$.

\mysection{Main results}

In this section we prove (\ref{eqn our}), a generalized version of the  parabolic Littlewood-Paley inequality, under the following conditions on the kernel $K(t,s,x)$ and  provide a classification of operators $A(t)$ for which  (\ref{eqn our}) holds  with $K=K_A$ (see (\ref{eqn 2.24.1})). Three interesting examples related to  the operators $A_1(t), A_2(t)$ and $(-\Delta)^k\cL_0(t)$ are also presented.

\vspace{3mm}

Assumption \ref{assumption1} below is needed to prove (\ref{eqn our}) for $p=2$.

\begin{assumption}              \label{assumption1}
The kernel $K(t,s,x)$ is a measurable function defined on $\fR^{d+2}$ satisfying
\begin{align}
\label{as1}
\int_{s}^\infty |\cF\{K (t,s, \cdot)\}(\xi)|^2  dt \leq C_{0}
\end{align}
with constant $C_0$  independent of $(s,\xi)$.
\end{assumption}

$\\$

Take a constant $c_{2} > \frac{1}{2}$  and denote

\begin{align}            \label{c1c2relation}
c_{3} := \frac{2(d+1)(c_{2}+1)+3}{2(d+2)}.
\end{align}

\begin{assumption}              \label{assumption2}

(i) For almost all $t$ and each $s<t$, $K(t,s,\cdot)$  $D_x K(t,s,\cdot)$  and $\frac{\partial}{\partial t}K (t,s,x)$ are locally integrable functions of $x$.

 (ii) There exist  functions $F_i(t,s,x)$ and  positive constants $\sigma_i,\kappa_i$ ($i=1,2,3)$ and $C$ such that for almost all $t$ and each $s<t$ and $x\in \fR^d\setminus \{0\}$,

\begin{align}        \label{as2}
\big|D_xK(t,s,x) \big| \leq C  \big| (t-s)^{-\sigma_1} F_1\big(t,s,(t-s)^{-\kappa_1} x\big) \big|,
\end{align}

\begin{align}            \label{as3-1}
\big|D^2_xK(t,s,x) \big| \leq C \Big((t-s)^{-\sigma_2} \big|F_2(t,s,(t-s)^{{-\kappa_2}} x) \big| \wedge (t-s)^{{-c_{2}}} \Big),
\end{align}

\begin{align}            \label{as3-2}
\big|\frac{\partial}{ \partial t }D_xK(t,s,x) \big| \leq C \Big((t-s)^{{-\sigma_3}} \big| F_3(t,s,(t-s)^{{-\kappa_3}} x) \big| \wedge (t-s)^{{-c_{3}}}\Big).
\end{align}

\vspace{2mm}

(iii) For these $F_{i}$ ($i=1,2,3$),  we have

\begin{align}        \label{finite1}
\sup_{s<t}\int_{\fR^d} |x|^{\mu_1}|F_1(t,s,x)|^2 dx  < \infty,
\end{align}

\begin{align}           \label{finite2}
\sup_{s<t}\int_{|x| \geq (t-s)^{c_{2}-c_{3}+1 - \kappa_2}} |x|^{\mu_2} |F_2(t,s,x)|^2 dx  < \infty,
\end{align}

\begin{align}               \label{finite3}
\sup_{s<t}\int_{|x| \geq (t-s)^{c_{2}-c_{3}+1 - \kappa_3}} |x|^{\mu_3} |F_3(t,s,x)|^2 dx  < \infty,
\end{align}


%
where  $\mu_i>d+2$ ($i=1,2,3$)  satisfy the following system

\begin{align}\label{algebraic condition}
\left\{
  \begin{array}{c}
    (c_{2}-c_{3}+1-\kappa_{1})\mu_{1}=d(\kappa_{1}+c_{2}-c_{3}+1)+2(c_{2}-c_{3}-\sigma_{1})+3 \\ \\
    (c_{2}-c_{3}+1-\kappa_{2})\mu_{2}=d(\kappa_{2}-c_{2}+c_{3}-1)+2(c_{2}-\sigma_{2}) \\ \\
    (c_{2}-c_{3}+1-\kappa_{3})\mu_{3}=d(\kappa_{3}-c_{2}+c_{3}-1)+2(c_{3}-\sigma_{3}) \\
  \end{array}
\right\}.
\end{align}
\end{assumption}

\vspace{3mm}

\begin{remark}
(i) Suppose
$$
\sup_{s<t}\int_{\fR^d} |F_i(t,s,x)|^2 dx <\infty, \quad  (i=1,2,3)
$$
and
$$
\sup_{s<t}\int_{\fR^d} |x|^{\hat{\mu}_i}|F_i(t,s,x)|^2 dx  < \infty  \quad (i=1,2,3)
$$
with some $(\hat{\mu}_1, \hat{\mu}_2,\hat{\mu}_3)\in \fR^3$. Then  obviously (\ref{finite1})-(\ref{finite3})  hold
  for any $\mu_i\leq \hat{\mu}_i$ ($i=1,2,3$).

\vspace{2mm}
(ii) Suppose, for example, $c_2-c_3+1-\kappa_2=0$. Then in (\ref{algebraic condition}) we are assuming
$$
d(\kappa_{2}-c_{2}+c_{3}-1)+2(c_{2}-\sigma_{2})=0.
$$
In this case, we have a freedom of choosing $\mu_2$, that is we can  choose arbitrary $\mu_2>d+2$ satisfying (\ref{finite1}).

\vspace{2mm}
(iii)
  Put
\begin{align}            \label{delta01}
\delta_{0} := c_{2}-c_{3} +1,\quad \Theta(\theta,\vartheta):=\theta d-2\vartheta.
\end{align}
One can easily check
$$
\Theta(\theta_{1}+\theta_{2},\vartheta_{1}+\vartheta_{2})=\Theta(\theta_{1},\vartheta_{1})+\Theta(\theta_{2},\vartheta_{2}),
$$
and  \eqref{algebraic condition}  is equivalent to
\begin{align}       \label{matrix}
\left[
  \begin{array}{ccc}
    \delta_{0}-\kappa_{1} &  &  \\
     & \delta_{0}-\kappa_{2} &  \\
     &  & \delta_{0}-\kappa_{3} \\
  \end{array}
\right]
\left[
  \begin{array}{c}
    \mu_{1}  \\
    \mu_{2}  \\
    \mu_{3}  \\
  \end{array}
\right]
=
\left[
  \begin{array}{c}
    \Theta(\kappa_{1}+\delta_{0},\sigma_{1}-\delta_{0})+1  \\
    \Theta(\kappa_{2}-\delta_{0},\sigma_{2}-c_{2})  \\
    \Theta(\kappa_{3}-\delta_{0},\sigma_{3}-c_{3})  \\
  \end{array}
\right].
\end{align}
\end{remark}

\vspace{5mm}

Note that to prove (\ref{eqn our}) we may assume $a=-\infty$ and $b=\infty$. Recall $H$ denote a Hilbert space.
Here are the main results of this article. The proofs of Theorems \ref{maintheorem} and \ref{pseudothm} are given in Sections \ref{pfmainthm} and
\ref{pseudothmpf} respectively.

\begin{thm}             \label{maintheorem}
Let $p \geq 2$. Suppose that Assumptions \ref{assumption1} and \ref{assumption2} hold. Then
for any $f\in C^{\infty}_0(\fR^{d+1},H)$,
\begin{align*}
\left\|\Big(\int_{-\infty}^t \big|K(t,s, \cdot) \ast f(s,\cdot)(x)\big|_H^2 ds \Big)^{1/2} \right\|_{L_p(\fR^{d+1})} \leq N \||f|_H\|_{L_p(\fR^{d+1})},
\end{align*}
where $N$ is independent of $f$.
\end{thm}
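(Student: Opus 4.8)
The plan is the classical two-step route for parabolic Littlewood--Paley estimates. Write $\cG$ for the operator
\[
\cG f(t,x):=\Big(\int_{-\infty}^{t}\big|K(t,s,\cdot)\ast f(s,\cdot)(x)\big|_H^2\,ds\Big)^{1/2},
\]
so that Theorem \ref{maintheorem} asserts $\|\cG f\|_{L_p(\fR^{d+1})}\le N\||f|_H\|_{L_p(\fR^{d+1})}$ for $f\in C^\infty_0(\fR^{d+1},H)$. First I would prove the case $p=2$ directly from Plancherel's identity and Assumption \ref{assumption1}. Then, for $p>2$, I would estimate the parabolic sharp function $(\cG f)^{\sharp}$ pointwise by $\big(\cM(|f|_H^2)\big)^{1/2}$ and apply the Fefferman--Stein $\sharp$-function theorem together with the Hardy--Littlewood maximal theorem; here the sharp function and the maximal function $\cM$ are taken with respect to the one-sided parabolic cylinders
\[
Q_r(t_0,x_0):=(t_0-r,\,t_0)\times B_{r^{\delta_0}}(x_0),\qquad \delta_0:=c_2-c_3+1,
\]
which shrink to points because the hypothesis $c_2>\tfrac12$ forces $\delta_0>0$ (this, together with solvability of \eqref{algebraic condition} below, is exactly what dictates the value of $c_3$ in \eqref{c1c2relation}); with these cylinders $\fR^{d+1}$ is a space of homogeneous type, so both theorems apply.

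\emph{Case $p=2$.} By Tonelli's theorem, Plancherel's identity in the $x$-variable applied to the $H$-valued function $K(t,s,\cdot)\ast f(s,\cdot)$, and then an interchange of the $t$- and $s$-integrations,
\begin{align*}
\|\cG f\|_{L_2(\fR^{d+1})}^2
&=\int_{\fR}\int_{-\infty}^{t}\int_{\fR^d}\big|K(t,s,\cdot)\ast f(s,\cdot)(x)\big|_H^2\,dx\,ds\,dt\\
&=\frac{1}{(2\pi)^d}\int_{\fR}\int_{-\infty}^{t}\int_{\fR^d}\big|\cF\{K(t,s,\cdot)\}(\xi)\big|^2\,\big|\cF\{f(s,\cdot)\}(\xi)\big|_H^2\,d\xi\,ds\,dt\\
&=\frac{1}{(2\pi)^d}\int_{\fR}\int_{\fR^d}\Big(\int_s^\infty\big|\cF\{K(t,s,\cdot)\}(\xi)\big|^2\,dt\Big)\big|\cF\{f(s,\cdot)\}(\xi)\big|_H^2\,d\xi\,ds .
\end{align*}
The inner $t$-integral is $\le C_0$ by \eqref{as1}, and Plancherel's identity once more gives $\|\cG f\|_{L_2}^2\le C_0\||f|_H\|_{L_2}^2$.

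\emph{Sharp-function estimate for $p>2$.} Fix $(t_0,x_0)$ and $Q=Q_r(t_0,x_0)$, let $Q^{\ast}$ be a fixed parabolic dilate of $Q$, and split $f=f_1+f_2$ with $f_1:=f\,I_{Q^{\ast}}$. The contribution of $f_1$ is controlled by the case $p=2$:
\[
\frac{1}{|Q|}\int_Q\cG f_1\,dx\,dt\le\Big(\frac{1}{|Q|}\int_Q(\cG f_1)^2\,dx\,dt\Big)^{1/2}\le\Big(\frac{C_0}{|Q|}\int_{Q^{\ast}}|f|_H^2\Big)^{1/2}\le N\big(\cM(|f|_H^2)(t_0,x_0)\big)^{1/2}.
\]
For $f_2$, which vanishes on $Q^{\ast}$, one bounds the oscillation $\frac{1}{|Q|^2}\int_Q\int_Q\big|\cG f_2(t,x)-\cG f_2(t',x')\big|$: for $(t,x),(t',x')\in Q$ with $t'<t$, the triangle inequality for the $L_2(ds;H)$-norm bounds this increment by a short-time term $\big(\int_{t'}^{t}|K(t,s,\cdot)\ast f_2(s,\cdot)(x)|_H^2\,ds\big)^{1/2}$ plus a term in which $K(t,s,\cdot)\ast f_2(s,\cdot)(x)-K(t',s,\cdot)\ast f_2(s,\cdot)(x')$ is split into a spatial increment --- a line integral of $D_xK\ast f_2$, handled by \eqref{as2} --- and a temporal increment $\int_{t'}^{t}\partial_\tau\big(K(\tau,s,\cdot)\ast f_2(s,\cdot)\big)(x')\,d\tau$. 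The subtle point is that \eqref{as3-2} bounds $\partial_tD_xK$, not $\partial_tK$; I would circumvent this by representing $\partial_\tau\big(K(\tau,s,\cdot)\ast f_2(s,\cdot)\big)$ as a one-dimensional integral along rays of $(\partial_\tau D_xK)\ast f_2$, which is legitimate because the decay forced by \eqref{as2} makes $K(\tau,s,\cdot)\ast f_2(s,\cdot)$ vanish at spatial infinity (the short-time term and the plain kernel are handled by the same device), with \eqref{as3-1} entering in the estimate of the resulting second-order increments. In every term, a Cauchy--Schwarz inequality in the spatial variable against the weights $|z|^{\mu_i}$, a dyadic decomposition producing the maximal function of $|f|_H^2$, the substitution $w=(t-s)^{-\kappa_i}z$, and finally the $ds$-integration reduce matters to the finiteness conditions \eqref{finite1}--\eqref{finite3}, the pointwise majorants in \eqref{as3-1}--\eqref{as3-2} absorbing the part where $t-s$ is small.

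\emph{The main obstacle} is precisely this last reduction. Since the operators in question need not be scale invariant, the three majorants carry genuinely different scaling exponents $\kappa_i,\sigma_i$, and after the substitution $w=(t-s)^{-\kappa_i}z$ and the $ds$-integration each of the three contributions produces a power of the radius $r$; the estimate closes only when all three powers coincide with the power of $r$ carried by $|Q|^{-1/2}\big(\int_{Q^{\ast}}|f|_H^2\big)^{1/2}$. Arranging this coincidence is exactly the algebraic system \eqref{algebraic condition} (equivalently \eqref{matrix}), while the constraints $\mu_i>d+2$ and the chosen value of $c_3$ guarantee the system is solvable and that the $ds$-integrals converge. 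Carrying out this bookkeeping cleanly, together with the ray-integral device that compensates for the absence of a direct bound on $\partial_tK$, is the technical heart of the proof. Once $(\cG f)^{\sharp}\le N\big(\cM(|f|_H^2)\big)^{1/2}$ is in hand, the Fefferman--Stein theorem (applicable since $\cG f\in L_2$ by the case $p=2$) and the Hardy--Littlewood maximal theorem with exponent $p/2>1$ yield
\[
\|\cG f\|_{L_p(\fR^{d+1})}\le N\|(\cG f)^{\sharp}\|_{L_p}\le N\big\|\big(\cM(|f|_H^2)\big)^{1/2}\big\|_{L_p}\le N\||f|_H\|_{L_p(\fR^{d+1})},
\]
which is the assertion of Theorem \ref{maintheorem}.
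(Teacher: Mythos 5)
Your high-level strategy coincides with the paper's: Plancherel plus Assumption \ref{assumption1} for $p=2$, a pointwise estimate of the parabolic sharp function of $\cG f$ by $\big(\bM(|f|_H^2)\big)^{1/2}$, and then Fefferman--Stein plus Hardy--Littlewood. The key technical device you call the ``ray integral'' is precisely the paper's Lemma~\ref{lemma2} (polar coordinates and integration by parts convert $v\ast f$ into an estimate involving $\nabla v$ against a maximal function of $f^2$), and you correctly identify that it is applied repeatedly --- to $K$ via $D_xK$, to $D_xK$ via $D_x^2K$, and to $\partial_tK$ via $\partial_tD_xK$ --- which is exactly how the paper avoids hypothesizing anything about $\partial_tK$ itself.

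Where you genuinely diverge is in the decomposition inside the oscillation estimate. You take a single space--time cutoff $f_1=fI_{Q^\ast}$ and bound the oscillation of $\cG f_2$ directly via a short-time term plus spatial and temporal increments of the integrand. The paper instead cuts off in time first, writing $f=\cA+\cB$ with $\zeta=\zeta(s)$ (so that $\cG\cB\le\cG f$, an inequality that does not persist for a space--time cutoff but is not actually needed, since Minkowski applied both ways already gives $|\cG f-c|\le|\cG f_1|+|\cG f_2-c|$), and then cuts $\cA$ in space as $\cA=\cA_1+\cA_2$. With that nested split, $\cG\cA_1$ is handled by the $L_2$ estimate, $\cG\cA_2$ by an $L_2$ bound over the cylinder (Lemma~\ref{lemma3}), and $\cG\cB$ by pointwise bounds on $\nabla\cG\cB$ and $D_t\cG\cB$ (Lemmas~\ref{lemma3-2}, \ref{lemma4}); the last two are clean exactly because $\cB$ vanishes for times near the cylinder, so $\cG\cB$ is $C^1$ there and one may apply the mean value theorem. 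In your decomposition $f_2=\cA_2+\cB$, so $\cG f_2$ is not differentiable near the cylinder; you correctly replace the mean value theorem by increments of the integrand, but you then inherit the near-in-time, far-in-space contributions inside both the spatial and temporal increment terms. These require a version of Lemma~\ref{lemma3} applied to $D_xK$ and $\partial_tK$ (not only to $K$), and the $ds$-integral near $s=t'$ must then be checked to converge using the spatial separation and the relations $\eqref{algebraic condition}$; the identity $\delta_0(2d+2)=2c_3-3$ built into \eqref{c1c2relation} is exactly what makes the resulting power of $R$ vanish. This all works, but it is more delicate bookkeeping than the paper's route, which isolates the near-time/far-space piece once and for all in Lemma~\ref{lemma3}. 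So: the approach is the same in spirit and the key lemma is the same; the decomposition is genuinely different, the paper's being structured to keep the time-derivative estimates away from the near-time regime entirely, while yours folds the two regimes together and relies more heavily on the algebraic coincidence in $\eqref{algebraic condition}$.
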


Let $A(t)$ be a non-positive operator with the symbol $\psi(t,\xi)$, that is
$$
\cF (A(t)u)(\xi)=\psi(t,\xi) \cF(u)(\xi), \quad \forall \, u\in C^{\infty}_0 (\fR^d).
$$
Define the kernel $p(t,s,x)$ by the formula
\begin{align}
p(t,s,x)=I_{0 \leq s<t } \cF^{-1} \Big( \exp \big(\int_s^t \psi(r,\xi))dr \big) \Big)(x).
\end{align}

\begin{thm}         \label{pseudothm}
Fix $p \geq 2$ and $\gamma >0$. Assume there exist  constants $\nu >0$
such that for any multi-index $|\alpha| \leq \lfloor\frac{d}{2}\rfloor+2$,
\begin{equation}
     \label{A1}
      \Re [\psi(t,\xi)]  \leq - \nu |\xi|^\gamma,
\end{equation}
\begin{equation}
                         \label{A2}
 |D^\alpha \psi(t,\xi)| \leq \nu^{-1} |\xi|^{\gamma -|\alpha|}
 \end{equation}
hold for almost every $t>0$ and $\xi\neq 0$. Then for any $f \in C^\infty_0(\fR^{d+1}, H)$
\begin{align*}
\left\|\Big(\int_0^t |\Delta^{\gamma/4} p (t,s,\cdot) \ast f(s,\cdot)(x)|_{H}^2~ds \Big)^{1/2} \right\|_{L_p(\fR^{d+1})} \leq N \| |f|_{H}\|_{L_p(\fR^{d+1})},
\end{align*}
where $N$ depends only on $p,\nu, \gamma$ and $d$.
\end{thm}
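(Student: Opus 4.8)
The plan is to apply Theorem \ref{maintheorem} with the kernel $K(t,s,x):=\Delta^{\gamma/4}p(t,s,x)$, i.e.\ the kernel with
$$\cF\{K(t,s,\cdot)\}(\xi)=I_{0\le s<t}\,|\xi|^{\gamma/2}\exp\Big(\int_s^t\psi(r,\xi)\,dr\Big);$$
it then suffices to verify Assumptions \ref{assumption1} and \ref{assumption2} for this $K$. By \eqref{A1}, $\Re\int_s^t\psi(r,\xi)\,dr\le-\nu(t-s)|\xi|^\gamma$, so $|\cF\{K(t,s,\cdot)\}(\xi)|\le|\xi|^{\gamma/2}e^{-\nu(t-s)|\xi|^\gamma}$, and combining this with \eqref{A2} and the chain rule for higher-order derivatives gives $|D^\alpha_\xi\cF\{K(t,s,\cdot)\}(\xi)|\le C(1+(t-s)|\xi|^\gamma)^{|\alpha|}|\xi|^{\gamma/2-|\alpha|}e^{-\nu(t-s)|\xi|^\gamma}$ for $|\alpha|\le\lfloor d/2\rfloor+2$. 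Assumption \ref{assumption1} is then immediate, since $\int_s^\infty|\xi|^\gamma e^{-2\nu(t-s)|\xi|^\gamma}\,dt=\tfrac1{2\nu}$ is independent of $(s,\xi)$. Since these symbols (and the one of $\partial_tK$, which carries the extra factor $\psi(t,\xi)$ of size $\le\nu^{-1}|\xi|^\gamma$) are integrable in $\xi$, the functions $K(t,s,\cdot)$, $D_xK(t,s,\cdot)$ and $\partial_tK(t,s,\cdot)$ are bounded and continuous, which gives Assumption \ref{assumption2}(i).

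For Assumption \ref{assumption2}(ii)--(iii) I would exploit the parabolic scaling $\xi=(t-s)^{-1/\gamma}\eta$: because \eqref{A1}--\eqref{A2} are invariant under it, one obtains the identities
\begin{align*}
D^j_xK(t,s,x)=(t-s)^{-(d+j+\gamma/2)/\gamma}\,F_j\big(t,s,(t-s)^{-1/\gamma}x\big),\qquad j=0,1,2,
\end{align*}
where $F_j(t,s,y):=\cF^{-1}\big(m_j(t,s,\cdot)\big)(y)$ and $m_j(t,s,\eta)$ is (a homogeneous polynomial of degree $j$ in $i\eta$) times $|\eta|^{\gamma/2}e^{\phi_{t,s}(\eta)}$ with $\phi_{t,s}(\eta):=\int_s^t\psi(r,(t-s)^{-1/\gamma}\eta)\,dr$; here $\Re\phi_{t,s}(\eta)\le-\nu|\eta|^\gamma$ and $|D^\alpha_\eta\phi_{t,s}(\eta)|\le\nu^{-1}|\eta|^{\gamma-|\alpha|}$ for $|\alpha|\le\lfloor d/2\rfloor+2$, \emph{uniformly in $s<t$}. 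A similar identity holds for $\partial_tD_xK$, with exponent $-(d+1+3\gamma/2)/\gamma$ and $m_j$ multiplied by the extra factor $\widetilde\psi_{t,s}(\eta):=(t-s)\psi(t,(t-s)^{-1/\gamma}\eta)$, which satisfies the same type of bounds as $\phi_{t,s}$. I therefore put $\kappa_1=\kappa_2=\kappa_3=\tfrac1\gamma$, $\sigma_1=\tfrac{d+1+\gamma/2}{\gamma}$, $\sigma_2=c_2=\tfrac{d+2+\gamma/2}{\gamma}$ (so $c_2>\tfrac12$), $\sigma_3=\tfrac{d+1}{\gamma}+\tfrac32$, and take $F_i$ as above; the uniform bound $|F_i|\le\|m_i(t,s,\cdot)\|_{L_1}\le C$ produces the `$\wedge(t-s)^{-c_2}$' and `$\wedge(t-s)^{-c_3}$' clauses of \eqref{as3-1}--\eqref{as3-2}. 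With this $c_2$, the $c_3$ of \eqref{c1c2relation} comes out to $\tfrac{d+1}{\gamma}+\tfrac32=\sigma_3$, hence $\delta_0:=c_2-c_3+1=\tfrac1\gamma=\kappa_i$; so the matrix in \eqref{matrix} \emph{and} its right-hand side both vanish, \eqref{algebraic condition} holds for \emph{every} $(\mu_1,\mu_2,\mu_3)$, and I am free to pick any admissible $\mu_i>d+2$.

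What remains---the crux---is to exhibit $\mu_i>d+2$ for which \eqref{finite1}--\eqref{finite3} hold; since the $F_i$ are bounded near $0$ it suffices to show $\sup_{s<t}\||x|^{\mu_i/2}F_i(t,s,\cdot)\|_{L_2(\fR^d)}<\infty$, which by Plancherel's identity is a uniform bound on the order-$\mu_i/2$ fractional Sobolev norm of $m_i(t,s,\cdot)$. Near $\eta=0$ the symbol $m_i(t,s,\cdot)$ vanishes like $|\eta|^{i+\gamma/2}$ (like $|\eta|^{1+3\gamma/2}$ for the $\partial_tD_x$-symbol), with $k$-th order $\eta$-derivatives of size $O(|\eta|^{i+\gamma/2-k})$ for $k\le\lfloor d/2\rfloor+2$ owing to the bounds on $\phi_{t,s}$ (hence on $\psi$), while away from $\eta=0$ it is smooth with exponential decay. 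Consequently $m_i(t,s,\cdot)\in H^s$ uniformly in $s<t$ for every $s$ strictly below $\min\{i+\tfrac\gamma2+\tfrac d2,\ \lfloor d/2\rfloor+2\}$, so \eqref{finite1}--\eqref{finite3} hold for any $\mu_i$ with $d+2<\mu_i<\min\{d+2i+\gamma,\ 2\lfloor d/2\rfloor+4\}$ (replace $2i$ by $2$ and $i+\tfrac\gamma2$ by $1+\tfrac{3\gamma}2$ for $i=3$); each such interval is nonempty because both of its endpoints exceed $d+2$. With Assumptions \ref{assumption1} and \ref{assumption2} verified, Theorem \ref{maintheorem} applied to this $K$ gives the assertion.

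The hard part is this last step: producing weight exponents $\mu_i$ \emph{strictly} larger than $d+2$ from only $\lfloor d/2\rfloor+2$ controlled $\xi$-derivatives of $\psi$. That is precisely why one must work with fractional rather than integer-order Sobolev regularity of the symbols $m_i$, and must keep careful track of the $|\eta|^{\gamma/2}$-type vanishing of $m_i$ at the origin---this vanishing is what opens the window $(d+2,\,d+2i+\gamma)$. The scaling identities, the collapse of \eqref{algebraic condition}, and the verification of Assumption \ref{assumption1} are routine in comparison.
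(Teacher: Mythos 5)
Your proposal is correct and follows essentially the same route as the paper: take $K=(-\Delta)^{\gamma/4}p$, verify Assumption~\ref{assumption1} directly from \eqref{A1}, use the parabolic scaling $\xi\mapsto(t-s)^{-1/\gamma}\eta$ to build $F_1,F_2,F_3$, read off the same $\kappa_i,\sigma_i,c_2,c_3$ (so $\delta_0=\kappa_i$ and the algebraic system \eqref{algebraic condition} degenerates to $0=0$), and finally obtain the weighted $L^2$ bounds \eqref{finite1}--\eqref{finite3} by Plancherel and fractional Sobolev regularity of the symbols $m_i$ — which is exactly the content of the paper's Lemma~\ref{eta1} and Corollary~\ref{confirmas2}. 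The admissible ranges you derive, $\mu_1<d+2+\gamma$, $\mu_2<d+4+\gamma$, $\mu_3<d+2+3\gamma$, capped by $2\lfloor d/2\rfloor+4$ from the $\lfloor d/2\rfloor+2$ controlled derivatives, coincide with the paper's.
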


For applications of Theorem \ref{pseudothm} we recall the operators $A_i(t)$ from (\ref{eqn 2.2.1}), that is,
$$
A_1(t)u=(-1)^{m-1}\sum_{|\alpha|=|\beta|=m}a^{\alpha \beta}(t)D^{\alpha+\beta}u,\quad \quad A_2(t)=-a(t)(-\Delta)^{\gamma/2}
$$
where the coefficients $a^{\alpha \beta}$ and $a(t)$ are bounded complex-valued measurable functions satisfying $\nu<\Re[a(t)]<\nu^{-1}$ and
\begin{align*}
\nu |\xi|^{2m} \leq  \sum_{|\alpha|=|\beta|=m}  \xi^\alpha \xi^\beta \Re\left[a^{\alpha \beta}(\omega,t)\right]  \leq \nu^{-1} |\xi|^{2m}, \quad \forall \xi\in \fR^d.
\end{align*}
 Denote
$$
p_1(t,s,z)=p_{1,m}(t,s,x)= I_{0 \leq s<t }\cF^{-1} \Big( \exp\big\{-\int_s^t a^{\alpha \beta}(r) \xi^\alpha \xi^\beta dr \big\} \Big)(x),
$$
$$
p_2(t,s,x)=p_{2,\gamma}(t,s,z)=I_{0 \leq s<t }\cF^{-1} \Big( \exp\big\{- |\xi|^{\gamma} \int_s^t a(r)dr\big\} \Big)(x).
$$

\begin{corollary}         \label{maintheorem3}
Let $p\geq 2$. Then for any $f \in C^\infty_0(\fR^{d+1}, H)$,
\begin{align*}
\left\|\Big(\int_0^t |\Delta^{m/2}p_1(t,s,\cdot) \ast f(s,\cdot)(x)|_{H}^2~ds \Big)^{1/2} \right\|_{L_p(\fR^{d+1})} \leq N \| |f|_{H}\|_{L_p(\fR^{d+1})},
\end{align*}
where $N$ depends only on $p,\nu, m$ and $d$.
\end{corollary}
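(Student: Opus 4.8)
The plan is to derive Corollary \ref{maintheorem3} as an immediate consequence of Theorem \ref{pseudothm} by verifying that the operator $A_1(t)$ with symbol $\psi_1(t,\xi)=-\sum_{|\alpha|=|\beta|=m}a^{\alpha\beta}(t)\xi^\alpha\xi^\beta$ falls into its scope with $\gamma=2m$. First I would write the symbol explicitly and check the two hypotheses \eqref{A1} and \eqref{A2} of Theorem \ref{pseudothm}. For \eqref{A1}, note that
\begin{align*}
\Re[\psi_1(t,\xi)]=-\sum_{|\alpha|=|\beta|=m}\xi^\alpha\xi^\beta\Re[a^{\alpha\beta}(t)]\leq -\nu|\xi|^{2m},
\end{align*}
which is exactly the ellipticity assumption on the coefficients, so \eqref{A1} holds with exponent $\gamma=2m$. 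For \eqref{A2}, observe that $\psi_1(t,\cdot)$ is a homogeneous polynomial of degree $2m$ in $\xi$ whose coefficients are bounded in $t$ by $\nu^{-1}$ (up to a combinatorial constant depending only on $m$ and $d$); hence for any multi-index $\alpha$ with $|\alpha|\leq 2m$, $D^\alpha\psi_1(t,\xi)$ is a homogeneous polynomial of degree $2m-|\alpha|$ with coefficients bounded by a constant $N(m,d)\nu^{-1}$, giving $|D^\alpha\psi_1(t,\xi)|\leq N(m,d)\nu^{-1}|\xi|^{2m-|\alpha|}$, while for $|\alpha|>2m$ the derivative vanishes. Thus \eqref{A2} holds for all $|\alpha|\leq\lfloor d/2\rfloor+2$ with $\nu$ replaced by a constant depending only on $\nu,m,d$ (which is harmless since $N$ in Theorem \ref{pseudothm} may depend on these).

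Having checked the hypotheses, I would invoke Theorem \ref{pseudothm} directly: with $\psi=\psi_1$, $\gamma=2m$, the kernel $p(t,s,x)$ defined there coincides with $p_1(t,s,x)=p_{1,m}(t,s,x)$, and $\Delta^{\gamma/4}p=\Delta^{m/2}p_1$. The conclusion of Theorem \ref{pseudothm} then reads precisely
\begin{align*}
\left\|\Big(\int_0^t |\Delta^{m/2}p_1(t,s,\cdot)\ast f(s,\cdot)(x)|_H^2\,ds\Big)^{1/2}\right\|_{L_p(\fR^{d+1})}\leq N\||f|_H\|_{L_p(\fR^{d+1})}
\end{align*}
for all $f\in C^\infty_0(\fR^{d+1},H)$, with $N$ depending only on $p$, the constants in \eqref{A1}--\eqref{A2}, and $d$ — that is, on $p,\nu,m,d$. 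This is exactly the assertion of Corollary \ref{maintheorem3}, so the proof is complete.

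The only point requiring any care is the verification of \eqref{A2} when one differentiates more than $2m$ times, but this is trivial since the symbol is polynomial. I do not anticipate a genuine obstacle here; the substantive work is entirely contained in Theorems \ref{maintheorem} and \ref{pseudothm}, and this corollary is a straightforward specialization. One minor bookkeeping item is to make sure the combinatorial factor from differentiating the homogeneous polynomial $\psi_1$ (the number of monomials of degree $2m$ in $d$ variables, which is $\binom{2m+d-1}{d-1}$) is absorbed into the constant — since it depends only on $m$ and $d$, it is harmless, and one may simply replace $\nu$ by $\nu/N(m,d)$ in applying Theorem \ref{pseudothm}.
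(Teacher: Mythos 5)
Your proof is correct and takes exactly the same route as the paper: verify that the symbol $\psi(t,\xi)=-\sum_{|\alpha|=|\beta|=m}a^{\alpha\beta}(t)\xi^\alpha\xi^\beta$ satisfies \eqref{A1} and \eqref{A2} with $\gamma=2m$, then invoke Theorem \ref{pseudothm}. The paper simply declares this verification ``obvious,'' whereas you spell out the homogeneity and vanishing-of-high-order-derivatives details; both are the same argument.
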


\begin{proof}
It is obvious that the symbol
$\psi(t,\xi)=-a^{\alpha \beta}(t) \xi^\alpha \xi^\beta$
satisfies  (\ref{A1}) and (\ref{A2}) with $\gamma=2m$ and any multi-index $\alpha$. Thus the corollary follows from Theorem \ref{pseudothm}.
\end{proof}

\begin{corollary}         \label{maintheorem2}
Let $p\geq 2$.  Then for any $f \in C^\infty_0(\fR^{d+1}, H)$,
\begin{align*}
\left\|\Big(\int_0^t |\Delta^{\gamma/4} p_{2} (t,s,\cdot) \ast f(s,\cdot)(x)|_{H}^2~ds \Big)^{1/2} \right\|_{L_p(\fR^{d+1})} \leq N \| |f|_{H}\|_{L_p(\fR^{d+1})},
\end{align*}
where $N$ depends only on $p,\nu, \gamma$ and $d$.
\end{corollary}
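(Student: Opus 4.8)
The plan is to reduce the corollary to Theorem \ref{pseudothm}. Here the operator is $A_2(t) = -a(t)(-\Delta)^{\gamma/2}$, whose symbol is $\psi(t,\xi) = -a(t)|\xi|^\gamma$. First I would verify the hypotheses of Theorem \ref{pseudothm}. The lower bound \eqref{A1} is immediate: since $\Re[a(t)] > \nu$, we have $\Re[\psi(t,\xi)] = -\Re[a(t)]\,|\xi|^\gamma \leq -\nu|\xi|^\gamma$. For the derivative bounds \eqref{A2}, note that $|a(t)| \leq |\Re[a(t)]| + |\Im[a(t)]|$ is bounded (the coefficient is assumed bounded measurable), so $|\psi(t,\xi)| \leq N|\xi|^\gamma$. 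More generally, for a multi-index $\alpha$ with $1 \leq |\alpha| \leq \lfloor d/2 \rfloor + 2$, one has $|D^\alpha_\xi (|\xi|^\gamma)| \leq N(d,\gamma,|\alpha|)\,|\xi|^{\gamma - |\alpha|}$ for $\xi \neq 0$, by homogeneity of $|\xi|^\gamma$ and smoothness away from the origin; multiplying by the bounded factor $a(t)$ gives $|D^\alpha \psi(t,\xi)| \leq \nu_1^{-1}|\xi|^{\gamma-|\alpha|}$ for a suitable constant $\nu_1$ depending on $\nu, \gamma, d$. Thus all hypotheses of Theorem \ref{pseudothm} hold with $\nu$ replaced by $\min(\nu,\nu_1)$.

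Next I would identify the kernel. By definition, $p_2(t,s,x) = I_{0\le s<t}\,\cF^{-1}\big(\exp\{-|\xi|^\gamma \int_s^t a(r)\,dr\}\big)(x)$, which is exactly the kernel $p(t,s,x)$ associated to the symbol $\psi(t,\xi) = -a(t)|\xi|^\gamma$ in the sense of Theorem \ref{pseudothm}, since $\int_s^t \psi(r,\xi)\,dr = -|\xi|^\gamma \int_s^t a(r)\,dr$. Therefore Theorem \ref{pseudothm} applies directly and yields
\begin{align*}
\left\|\Big(\int_0^t |\Delta^{\gamma/4} p_{2}(t,s,\cdot) \ast f(s,\cdot)(x)|_{H}^2~ds \Big)^{1/2} \right\|_{L_p(\fR^{d+1})} \leq N \| |f|_{H}\|_{L_p(\fR^{d+1})}
\end{align*}
for all $f \in C^\infty_0(\fR^{d+1},H)$, with $N = N(p,\nu,\gamma,d)$, which is the assertion.

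The only genuine point requiring care — and the step I expect to be the main obstacle, though a minor one — is the verification of \eqref{A2}, namely the bound $|D^\alpha_\xi(|\xi|^\gamma)| \leq N|\xi|^{\gamma-|\alpha|}$ for $|\alpha| \leq \lfloor d/2\rfloor + 2$. This follows from the fact that $|\xi|^\gamma$ is positively homogeneous of degree $\gamma$ and $C^\infty$ on $\fR^d \setminus \{0\}$, so its $\alpha$-th derivative is positively homogeneous of degree $\gamma - |\alpha|$ and continuous on the unit sphere, hence bounded there; scaling gives the claimed estimate. Since $\gamma > 0$ is arbitrary, no restriction $\gamma < 2$ is needed here, in contrast to the earlier literature. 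Everything else is a direct citation of Theorem \ref{pseudothm}.
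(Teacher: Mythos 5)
Your proof is correct and takes the same route as the paper: the paper's proof of Corollary~\ref{maintheorem2} is a one-line reduction to Theorem~\ref{pseudothm} after observing that the symbol of $A_2(t)$ is $-a(t)|\xi|^\gamma$. You simply fill in the routine verification of \eqref{A1} and \eqref{A2} (via homogeneity of $|\xi|^\gamma$ and boundedness of $a(t)$), which the paper leaves implicit.
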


\begin{proof}
The symbol related to the operator $A_2(t)$ is  $-a(t)|\xi|^{\gamma}$, and therefore the corollary follows from  Theorem \ref{pseudothm}.
\end{proof}
Recall we defined $(-\Delta)^{\gamma/2}$ as the operator with symbol $|\xi|^{\gamma}$ for any $\gamma\in (0,\infty)$.
For further applications of Theorem \ref{pseudothm}, we consider a product of $(-\Delta)^k$ and an integro-differential operator $\cL_0=\cL_{0,\gamma}$. We remark that in place of $(-\Delta)^k$ one can consider many other pseudo-differential or high order differential operators.

\vspace{2mm}

Fix $\gamma\in (0,2)$, and for $k=0,1,2,\cdots$ denote
\begin{align*}
 &\cL_{k}(t)u =(-\Delta)^k\cL_{0,\gamma} u:=\\
&\int_{\fR^d \setminus \{0\}} \Big( (-\Delta)^k u(t,x+y)- (-\Delta)^k u(t,x)-\chi(y)(\nabla (-\Delta)^k u (t,x),y) \Big) m(t,y)
\frac{dy}{|y|^{d+\gamma}}
\end{align*}
where $\chi(y)= I_{\gamma >1} + I_{|y|\leq1}I_{\gamma=1}$  and $m(t,y)\geq 0$ is measurable function satisfying  the following conditions :

(i) If $\gamma=1$ then
\begin{align}           \label{cancel}
\int_{\partial B_1} w m(t,w)~S_1(dw)=0, \quad \forall t >0,
\end{align}
where  $\partial B_{1}$ is the unit sphere in $\fR^d$ and $S_{1}(dw)$ is the surface measure on it.

(ii) The function $m=m(t,y) $ is zero-order homogeneous and differentiable in $y$ up to $d_0 = \lfloor\frac{d}{2}\rfloor+2$.

(iii) There is a constant $K$ such that for each  $t \in \fR$
$$
\sup_{|\alpha| \leq d_0, |y|=1} |D^\alpha_y m^{(\alpha)} (t,y) | \leq K.
$$
It turns out that the operator $\cL_k$ is a pseudo differential operator with  symbol
\begin{align*}
\psi(t,\xi) = - c_1 |\xi|^{2k}\int_{\partial B_{1}} |(w,\xi)|^\gamma [1-i\varphi^{(\gamma)}(w,\xi)] m(t,w)~S_{1}(dw),
\end{align*}
\begin{align*}
\varphi^{(\gamma)}(w,\xi)=c_2\frac{(w,\xi)}{|(w,\xi)|} I_{\gamma \neq 1}- \frac{2}{\pi} \frac{(w,\xi)}{|(w,\xi)|} \ln |(w,\xi)| I_{\gamma=1},
\end{align*}
and $c_1(\gamma,d)$, $c_2(\gamma,d)$ are certain positive constants.

(iv) There is a constant $N_{0}>0$ such that the symbol $\psi(t,\xi)$ of $\cL_k$ satisfies
\begin{equation}
             \label{eqn psi7}
\sup_{t,|\xi|=1} \Re[\psi(t,\xi)] \leq  -N_{0}.
\end{equation}
One can check that  (\ref{eqn psi7}) holds if there exists a constant $c>0$ so that $m(t,y)>c$ on a set  $E\subset \partial B_1$ of positive  $S_1(dw)$-measure.

\begin{corollary}         \label{maintheorem4}
Let $p\geq 2$ and $p(t,s,x)$ be the kernel related to $\cL_k(t)$. Then under above conditions (i)-(iv) on $m(t,y)$ it holds that  for any $f \in C^\infty_0(\fR^{d+1}, H)$
\begin{align*}
\left\|\Big(\int_0^t |\Delta^{k/2+\gamma/4} p(t,s,\cdot) \ast f(s,\cdot)(x)|_{H}^2~ds \Big)^{1/2} \right\|_{L_p(\fR^{d+1})} \leq N \| |f|_{H}\|_{L_p(\fR^{d+1})},
\end{align*}
where $N$ depends only on $p, \gamma, k,d, N_0$ and $K$.
\end{corollary}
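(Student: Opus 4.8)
The plan is to deduce Corollary \ref{maintheorem4} from Theorem \ref{pseudothm} with the order parameter $\gamma$ of that theorem replaced by $\gamma':=2k+\gamma>0$. Indeed, $\Delta^{k/2+\gamma/4}$ is by definition the operator with symbol $|\xi|^{k+\gamma/2}=|\xi|^{\gamma'/2}$, so the asserted inequality is exactly the conclusion of Theorem \ref{pseudothm} applied to $A(t)=\cL_k(t)$ with $\gamma'$ in place of $\gamma$. Hence it suffices to verify that the symbol $\psi(t,\xi)$ of $\cL_k(t)$ displayed above satisfies the two bounds (\ref{A1}) and (\ref{A2}) with $\gamma$ replaced by $\gamma'$ and with a constant $\nu>0$ depending only on $\gamma,k,d,N_0$ and $K$.

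For (\ref{A1}) I would note that the real-valued factor $\varphi^{(\gamma)}$ contributes nothing to $\Re[\psi]$, so $\Re[\psi(t,\xi)]=-c_1|\xi|^{2k}\int_{\partial B_1}|(w,\xi)|^{\gamma}m(t,w)\,S_1(dw)$, which is positively homogeneous in $\xi$ of degree $2k+\gamma=\gamma'$. Therefore $\Re[\psi(t,\xi)]=|\xi|^{\gamma'}\,\Re[\psi(t,\xi/|\xi|)]\leq -N_0|\xi|^{\gamma'}$ by condition (iv), i.e. by (\ref{eqn psi7}); this is (\ref{A1}) with $\nu=N_0$.

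The substantive step is (\ref{A2}), i.e. $|D^\alpha_\xi\psi(t,\xi)|\leq\nu^{-1}|\xi|^{\gamma'-|\alpha|}$ for every multi-index $|\alpha|\leq d_0=\lfloor d/2\rfloor+2$. The difficulty is that one cannot differentiate $|(w,\xi)|^{\gamma}$ (nor, when $\gamma=1$, $\ln|(w,\xi)|$) under the $S_1(dw)$-integral more than roughly $\gamma$ times without producing a factor $|(w,\xi)|^{\gamma-|\alpha|}$ that is not integrable over $\partial B_1$ near the equatorial $(d-2)$-sphere $\{(w,\xi)=0\}$. The remedy --- which is precisely why conditions (ii) and (iii) require $m(t,\cdot)$ to be differentiable up to order $d_0$ --- is, for fixed $\xi\neq0$, to rotate coordinates so that $\xi/|\xi|$ is the north pole, factor out $|\xi|^{\gamma'}$ so that only derivatives of $|(w,\xi/|\xi|)|^{\gamma}=|w_1|^{\gamma}$ (and of the degree-zero factor $\varphi^{(\gamma)}$) in unit directions remain, and then integrate by parts on $\partial B_1$ to transfer the excess derivatives onto $w\mapsto m(t,w)$: each transferred derivative is controlled by $K$ via condition (iii), each chain-rule passage from $\xi$ to $\xi/|\xi|$ supplies one factor $|\xi|^{-1}$, and after enough steps the integrand is dominated by $C(K)\,|\xi|^{\gamma'-|\alpha|}$ times an integrable power $|w_1|^{\gamma-1}$ (times $1+|\ln|w_1||$ when $\gamma=1$), integrable on $\partial B_1$ since $\gamma>0$. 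The case $\gamma=1$ has to be treated separately: there the logarithmic term in $\varphi^{(1)}$ generates, before integration, a contribution proportional to $|\xi|\ln|\xi|\,\bigl(\xi/|\xi|\cdot\int_{\partial B_1}w\,m(t,w)\,S_1(dw)\bigr)$, and this spurious $\ln|\xi|$ growth is annihilated exactly by the cancellation hypothesis (\ref{cancel}), leaving a bound of order $|\xi|^{\gamma'-|\alpha|}$, uniformly in $t$.

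I expect the integration-by-parts bookkeeping on $\partial B_1$ in the third paragraph --- tracking the $|(w,\xi)|^{\gamma}$ and logarithmic singularities uniformly in $t$, and in particular the role of (\ref{cancel}) when $\gamma=1$ --- to be the only genuinely delicate point; it is close to the symbol estimates carried out in \cite{MP} for $\cL_0(t)$, the present novelty being merely the harmless extra factor $|\xi|^{2k}$ coming from $(-\Delta)^k$, which only shifts the homogeneity degree from $\gamma$ to $\gamma'$. Once (\ref{A1}) and (\ref{A2}) are established with $\gamma'$, Corollary \ref{maintheorem4} follows immediately from Theorem \ref{pseudothm}.
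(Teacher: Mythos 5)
Your proposal follows essentially the same approach as the paper: reduce to Theorem \ref{pseudothm} with $\gamma$ replaced by $\gamma' = 2k+\gamma$, using homogeneity and condition (iv) for (\ref{A1}), the cancellation (\ref{cancel}) to keep $\psi$ genuinely $\gamma'$-homogeneous when $\gamma=1$, and derivative bounds on the degree-zero factor $\tilde\psi(t,\xi) := \psi(t,\xi/|\xi|)$ for (\ref{A2}). The paper's only difference is that it cites \cite[Remark 2.6]{MP92} for the estimate $|D^\alpha\tilde\psi(t,\xi)| \leq N|\xi|^{-|\alpha|}$ rather than sketching the rotation and integration-by-parts argument on $\partial B_1$ as you do.
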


\begin{proof}

 Note that for $\xi \neq 0$
 \begin{align*}
\psi(t, \xi)&
=|\xi|^{2k+\gamma}\psi\Big(t, \frac{\xi}{|\xi|} \Big)=: |\xi|^{2k+\gamma} \tilde \psi(t,\xi).
 \end{align*}
The above equality  is obvious if $\gamma \neq 1$, and if  $\gamma =1$ then by \eqref{cancel}
\begin{align*}
\psi(t, \xi)
&=|\xi|^{2k+1} \psi\Big(t, \frac{\xi}{|\xi|} \Big) + |\xi|^{2k}\ln |\xi| \int_{\partial B_1} (w,\xi) m(t,w)~S_1(dw) \\
&=|\xi|^{2k+1} \psi\Big(t, \frac{\xi}{|\xi|} \Big).
\end{align*}
By using condition (iii) one can check (see e.g. \cite[Remark 2.6]{MP92}) that for
 any multi-index $\alpha$, $|\alpha|\leq d_0$,  there exists a  constant $N=N(\alpha)$ such that
$$
|D^\alpha \tilde \psi(t,\xi)| \leq N|\xi|^{-|\alpha|}.
$$
Thus it is obvious that the given symbol $\psi$ satisfies (\ref{A1}) and (\ref{A2}). The corollary is proved.

\end{proof}

\mysection{Some preliminary estimates}

For $f\in C^{\infty}_0(\fR^{d+1},H)$, we define
\begin{align*}
\cG f (t,x):=  \left(\int_{-\infty}^t \big|K(t,s, \cdot) \ast f(s,\cdot)(x)\big|_H^2 ds \right)^{1/2}.
\end{align*}

\begin{lemma}           \label{lemma1}
Let Assumption \ref{assumption1} hold and $f\in C^{\infty}_0(\fR^{d+1},H)$. Then for
 any  $-\infty \leq a \leq b \leq \infty$,
\begin{align}
\label{4023} \|\cG f \|^2_{L_2( (a,b) \times \fR^d)} \leq
N\||f|_H\|^2_{L_2((-\infty,b) \times \fR^d)},
\end{align}
where $N=N(d,C_0)$.
\end{lemma}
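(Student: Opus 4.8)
The plan is to reduce the whole estimate to Parseval's identity in the spatial variable $x$, so that the kernel only enters through its Fourier transform and Assumption \ref{assumption1} can be applied directly. First I would write out the square of the norm and, since the integrand is nonnegative, use Tonelli to rearrange the order of integration:
\begin{align*}
\|\cG f\|^2_{L_2((a,b)\times\fR^d)} = \int_a^b \int_{-\infty}^t \Big( \int_{\fR^d} \big|K(t,s,\cdot)\ast f(s,\cdot)(x)\big|_H^2\, dx \Big)\, ds\, dt .
\end{align*}
Because $f\in C^\infty_0(\fR^{d+1},H)$, for each fixed $s$ the map $x\mapsto f(s,x)$ is smooth with compact support, hence $\cF\{f(s,\cdot)\}$ is a Schwartz function, the convolution $K(t,s,\cdot)\ast f(s,\cdot)$ has spatial Fourier transform $\cF\{K(t,s,\cdot)\}(\xi)\,\cF\{f(s,\cdot)\}(\xi)$ (the first factor scalar-valued), and by Parseval's identity (valid for $H$-valued functions since $H$ is a Hilbert space)
\begin{align*}
\int_{\fR^d} \big|K(t,s,\cdot)\ast f(s,\cdot)(x)\big|_H^2\, dx = \frac{1}{(2\pi)^d}\int_{\fR^d} \big|\cF\{K(t,s,\cdot)\}(\xi)\big|^2\, \big|\cF\{f(s,\cdot)\}(\xi)\big|_H^2\, d\xi
\end{align*}
for a.e. $t$ and each $s<t$.

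Substituting this in and once more invoking Tonelli to carry out the $t$-integration first, the key observation is that for fixed $s$ the variable $t$ ranges over the interval $(\max(a,s),b)$, which is a subset of $(s,\infty)$ (and empty when $s\geq b$); therefore
\begin{align*}
\int_{\max(a,s)}^{b} \big|\cF\{K(t,s,\cdot)\}(\xi)\big|^2\, dt \leq \int_{s}^{\infty} \big|\cF\{K(t,s,\cdot)\}(\xi)\big|^2\, dt \leq C_0
\end{align*}
by Assumption \ref{assumption1}, uniformly in $s$, $\xi$, $a$, $b$. This gives
\begin{align*}
\|\cG f\|^2_{L_2((a,b)\times\fR^d)} \leq \frac{C_0}{(2\pi)^d}\int_{-\infty}^{b} \int_{\fR^d} \big|\cF\{f(s,\cdot)\}(\xi)\big|_H^2\, d\xi\, ds = C_0\,\||f|_H\|^2_{L_2((-\infty,b)\times\fR^d)},
\end{align*}
the last equality being Parseval's identity again; so one may in fact take $N=C_0$.

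The only items requiring care, all routine, are: the joint measurability of $(t,s,\xi)\mapsto \cF\{K(t,s,\cdot)\}(\xi)$, needed to legitimize the Tonelli/Fubini manipulations; and the applicability of the Parseval identity above for a.e. $t$, which is fine because for each fixed $s$ the integral $\int_{\fR^d}|\cF\{K(t,s,\cdot)\}(\xi)|^2|\cF\{f(s,\cdot)\}(\xi)|_H^2\,d\xi$ is finite for a.e. $t$ (its $t$-integral is finite, using Assumption \ref{assumption1} and that $\cF\{f(s,\cdot)\}$ is bounded). I do not expect a genuine obstacle here; the one mild subtlety is bookkeeping the domain of the $t$-integration so that the lower limit can be taken to be $s$ when applying Assumption \ref{assumption1}, which is exactly what makes the bound uniform in $a$ and $b$.
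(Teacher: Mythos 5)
Your proof is correct and follows essentially the same approach as the paper: rewrite the squared $L_2$-norm, apply Parseval/Plancherel in the spatial variable to replace the convolution with a multiplication on the Fourier side, use Tonelli to exchange the $s$- and $t$-integrations, enlarge the $t$-domain to $(s,\infty)$, apply Assumption \ref{assumption1}, and undo Parseval. The paper justifies the vector-valued Parseval step by noting the range of $f$ lies in a separable subspace of $H$ and expanding in a countable orthonormal basis, which is the same point you address informally; the only other (cosmetic) discrepancy is the normalization factor, where the paper has a typo ($(2\pi)^d$ in place of $(2\pi)^{-d}$) that you get right.
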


\begin{proof}
By the continuity of $f$, the range of $f$ belongs to a
separable subspace of $H$. Thus by using  a countable orthonormal
basis of this subspace and the Fourier transform one easily finds

\begin{align*}
& \|\cG f\|^2_{L_2((a,b) \times \fR^{d})}\\
&= (2\pi)^d \int_{\fR^d} \int_{a}^b  \int_{-\infty}^t |\cF\{K (t,s, \cdot)\}(\xi)|^2 \, |\cF(f)(s,\xi)|^2_H ds dt d\xi\\
&\leq (2\pi)^d \int_{\fR^d} \int_{-\infty}^b \int_{a}^b I_{0 \leq t-s} |\cF\{K (t,s, \cdot)\}(\xi)|^2  dt|\cF(f)(s,\xi)|^2_H  ds d\xi\\
&\leq (2\pi)^d \int_{\fR^d} \int_{-\infty}^b \left(\int_{s}^\infty  |\cF\{K (t,s, \cdot)\}(\xi)|^2  dt\right)|\cF(f)(s,\xi)|^2_H  ds d\xi.
\end{align*}
From \eqref{as1}, we have
\begin{align*}
\|\cG f\|^2_{L_2( (a,b) \times \fR^{d} )} \leq N \int_{-\infty}^b
\int_{\fR^d} |\cF(f)(s,\xi)|^2_H ~d\xi ds.
\end{align*}
The last expression is equal to the right-hand side of (\ref{4023}),
and therefore  the lemma is proved.
\end{proof}

\begin{corollary}               \label{cor1}
Let $r_1,r_2>0$. Suppose that Assumption \ref{assumption1} holds, $f\in C^{\infty}_0(\fR^{d+1},H)$, and $f(t,x)=0$ for $x\not\in  B_{3r_1}$.
Then
$$
\int_{-2r_2}^0 \int_{B_{r_1}}|\cG f(s,y)|^2dyds \leq N(d,C_0) \int_{-\infty}^0 \int_{B_{3r_1}}|f(s,y)|_H^2dyds.
$$
\end{corollary}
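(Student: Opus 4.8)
The plan is to deduce this directly from Lemma~\ref{lemma1}, which already contains the global $L_2$ bound on $\cG f$; the corollary is merely its localized reformulation, and the support hypothesis on $f$ is exactly what collapses the right-hand side onto $B_{3r_1}$.

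First I would apply Lemma~\ref{lemma1} with the specific choice $a=-2r_2$ and $b=0$ (recall that Assumption~\ref{assumption1} is in force), which gives
\[
\|\cG f\|^2_{L_2((-2r_2,0)\times\fR^d)}\leq N(d,C_0)\,\||f|_H\|^2_{L_2((-\infty,0)\times\fR^d)}.
\]
Next, since $B_{r_1}\subset\fR^d$, the left-hand side dominates $\int_{-2r_2}^0\int_{B_{r_1}}|\cG f(s,y)|^2\,dy\,ds$, which is precisely the quantity to be estimated. Finally, using the hypothesis that $f(s,y)=0$ for $y\notin B_{3r_1}$, the right-hand side equals $N(d,C_0)\int_{-\infty}^0\int_{B_{3r_1}}|f(s,y)|_H^2\,dy\,ds$. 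Chaining these three observations yields the claimed inequality with the same constant $N(d,C_0)$.

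There is essentially no obstacle here; the statement is a direct corollary. The only point worth keeping in mind is that the spatial convolution $K(t,s,\cdot)\ast f(s,\cdot)$ need \emph{not} be supported in any ball even though $f(s,\cdot)$ is (the kernel $K$ is not compactly supported), so one cannot localize $\cG f$ itself. But this is irrelevant to the argument: on the left one only discards mass by passing from $\fR^d$ to $B_{r_1}$, and on the right one merely records the support of $f$; everything else is a verbatim application of Lemma~\ref{lemma1}.
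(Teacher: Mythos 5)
Your proof is correct and takes essentially the same route as the paper: apply Lemma~\ref{lemma1} with $a=-2r_2$, $b=0$, shrink the spatial domain on the left, and use the support of $f$ to restrict the right-hand side to $B_{3r_1}$. Your remark about $\cG f$ not inheriting compact support is a sensible aside but plays no role, just as you say.
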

\begin{proof}
Applying Lemma \ref{lemma1} with $a = -2r_2$ and $b=0$ and using the condition on $f$, we gt
\begin{align*}
\int_{-2r_2}^0 \int_{B_{r_1}}|\cG f(s,y)|^2dyds
&\leq \int_{-\infty}^0 \int_{\fR^d}|\cG f(s,y)|^2dyds\\
&\leq N\int_{-\infty}^0 \int_{\fR^d}|f(s,y)|_H^2dyds\\
&= N\int_{-\infty}^0 \int_{B_{3r_1}}|f(s,y)|_H^2dyds.
\end{align*}
Hence the corollary is proved.
\end{proof}

For $R \geq 0$ and real-valued locally integrable functions $h(x)$ on $\fR^d$, define the
maximal functions
$$
\bM^R_x h(x) := \sup_{r>R} \frac{1}{|B_{r}(x)|} \int_{B_{r}(x)} |h(y)| dy, \quad \quad \bM_x h(x):=\bM^0_x h(x).
$$
 Similarly,
for  real-valued locally integrable functions $h=h(t)$ on $\fR$ we introduce
$$
\bM^R_t h(t) := \sup_{r>R} \frac{1}{2r} \int_{-r}^r |h(t+s)|\, ds, \quad \quad \bM_t h(t):=\bM^0_t h(t).
$$
For  functions $h=h(t,x)$,  set
$$
\bM^R_x h(t,x) := \bM^R_x(h(t,\cdot))(x), \quad \bM^R_th(t,x) = \bM^R_t(h(\cdot,x))(t).
$$

Obviously if $R_1 \geq R_2$, then
$$
\bM^{R_1}_x h(x) \leq \bM^{R_2}_x h(x)
$$
and if $R_n \downarrow R$, then
$$
\bM^{R_n}_x h(x) \uparrow \bM^{R}_x h(x).
$$
The same properties hold for $\bM^R_t$.

 Let  $S_1(dw)$ denote the counting measure on $\{-1,1\}$ if $d=1$ and the surface measure on the unit sphere if $d \geq 2$. The following lemma is a slight modification of  \cite[Lemma 8]{MP}.

\begin{lemma}           \label{lemma2}
Let $f\in C_0(\fR^d)$, and $v(x)$ be a  locally integrable and continuously differentiable function on $\fR^d$. Let $x,y\in\fR^{d}$,   $|x-y| \leq R_1$ and $f(y-z)=0$ if $|z| \leq R_2$ with some constants $R_1, R_2 \geq 0$
Then it holds that
\begin{align*}
\big|(f \ast v)(y)|
\leq N \big( \bM^{R_1+R_2}_x f^2(x)\big)^{1/2} \int_{R_2}^{\infty}(R_1+\rho)^d \Big( \int_{\partial B_1 }\big( \nabla v(\rho w),w\big)^2S_1(dw)\Big)^{1/2}d\rho,
\end{align*}
where $N=N(d)$.

\end{lemma}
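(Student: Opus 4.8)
The plan is to pass to polar coordinates in the convolution, move one radial derivative off $v$ via the fundamental theorem of calculus, and then control the resulting $f$-factor by $\bM^{R_1+R_2}_x f^2$ using two applications of the Cauchy--Schwarz inequality. First I would write $(f\ast v)(y)=\int_{\fR^d}f(y-z)v(z)\,dz$ and, putting $z=\rho w$ with $\rho>0$, $w\in\partial B_1$, substitute
\[
v(\rho w)=-\int_\rho^\infty\bigl(\nabla v(\sigma w),w\bigr)\,d\sigma ,
\]
which is legitimate because $v(\sigma w)\to 0$ as $\sigma\to\infty$ (this holds in every application of the lemma, where $v=K(t,s,\cdot)$; and if the right-hand side of the claimed inequality is infinite there is nothing to prove, while otherwise $\int_{\partial B_1}\int_{R_2}^\infty|(\nabla v(\sigma w),w)|\,d\sigma\,S_1(dw)<\infty$, which also licenses the Fubini step below). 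Since $f(y-\cdot)$ is continuous, compactly supported, and vanishes on $B_{R_2}$, inserting this identity and interchanging the $\rho$- and $\sigma$-integrations over the region $R_2<\rho<\sigma<\infty$ gives
\[
(f\ast v)(y)=-\int_{\partial B_1}\int_{R_2}^\infty\bigl(\nabla v(\sigma w),w\bigr)\,g(\sigma,w)\,d\sigma\,S_1(dw),\qquad g(\sigma,w):=\int_{R_2}^{\sigma}\rho^{d-1}f(y-\rho w)\,d\rho .
\]

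Next I would apply Cauchy--Schwarz on $\partial B_1$ in the variable $w$; this isolates the factor $\bigl(\int_{\partial B_1}(\nabla v(\sigma w),w)^2 S_1(dw)\bigr)^{1/2}$ appearing in the statement and reduces everything to the pointwise bound
\[
\Bigl(\int_{\partial B_1}g(\sigma,w)^2\,S_1(dw)\Bigr)^{1/2}\le N(d)\,(R_1+\sigma)^{d}\,\bigl(\bM^{R_1+R_2}_x f^2(x)\bigr)^{1/2},\qquad \sigma\ge R_2 .
\]
To prove it, Cauchy--Schwarz in $\rho$ (writing $\rho^{d-1}=\rho^{(d-1)/2}\cdot\rho^{(d-1)/2}$) gives $g(\sigma,w)^2\le d^{-1}\sigma^{d}\int_{R_2}^{\sigma}\rho^{d-1}|f(y-\rho w)|^2\,d\rho$; integrating in $w$ turns the right-hand side back into a Euclidean integral,
\[
\int_{\partial B_1}g(\sigma,w)^2\,S_1(dw)\le \frac{\sigma^{d}}{d}\int_{R_2<|z|<\sigma}|f(y-z)|^2\,dz\le \frac{\sigma^{d}}{d}\int_{B_\sigma(y)}|f|^2 .
\]
Because $|x-y|\le R_1$ one has $B_\sigma(y)\subset B_{\sigma+R_1}(x)$ and $\sigma+R_1\ge R_1+R_2$, so the last integral is at most $|B_{\sigma+R_1}(x)|\,\bM^{R_1+R_2}_x f^2(x)=N(d)(\sigma+R_1)^{d}\bM^{R_1+R_2}_x f^2(x)$; together with $\sigma^{d}\le(\sigma+R_1)^{d}$ this yields the displayed bound. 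Substituting back and pulling $\bigl(\bM^{R_1+R_2}_x f^2(x)\bigr)^{1/2}$ out of the $\sigma$-integral produces exactly the asserted inequality with $N=N(d)$.

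The only genuinely delicate point is the radial fundamental-theorem-of-calculus step: it requires $v$ to vanish at infinity (without such a condition the inequality is false, e.g.\ $v\equiv 1$ with $f$ supported in an annulus about $y$ and $\int f\ne 0$), so I would record this as a standing hypothesis, noting that it is automatic in all applications since there $v=K(t,s,\cdot)$. Everything else is routine; the one thing to monitor is that the advertised weight $(R_1+\rho)^{d}$ is built from two half-powers --- $\sigma^{d/2}$ from the Cauchy--Schwarz weight $\int_{R_2}^{\sigma}\rho^{d-1}\,d\rho\approx\sigma^{d}$ and $(\sigma+R_1)^{d/2}$ from $|B_{\sigma+R_1}(x)|$ --- after the crude estimate $\sigma\le\sigma+R_1$.
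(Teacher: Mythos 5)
Your proof is correct and follows essentially the same route as the paper: polar coordinates, moving the radial derivative off $v$ (your fundamental-theorem-of-calculus substitution $v(\rho w)=-\int_\rho^\infty(\nabla v(\sigma w),w)\,d\sigma$ and the paper's integration by parts against $\frac{d}{d\rho}\int_{R_2}^\rho f(y-\gamma w)\gamma^{d-1}d\gamma$ produce the identical double integral), then Cauchy--Schwarz and the maximal-function bound with the same $(R_1+\rho)^d$ bookkeeping. Your remark about the decay of $v$ is also well taken: the paper justifies the vanishing of the boundary term by asserting that some sequence $\rho_n\to\infty$ has $v(\rho_n w)\to 0$, but this does not follow from local integrability plus $C^1$ alone (e.g.\ $v\equiv 1$), so the lemma as stated silently relies on a decay property of $v$ that happens to hold in all of its applications where $v$ is a kernel $K(t,s,\cdot)$ or one of its derivatives.
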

\begin{proof}
 Since the case $d=1$ is easier, we assume $d\geq 2$.  Using the
polar coordinates and Fubini's theorem we get
\begin{align*}
\int_{|z| >R_2 } f(y-z) v(z)~dz
&= \int_{R_2}^\infty \int_{\partial B_{1}}f(y- \rho w) v(\rho w)\rho^{d-1} ~S_1(dw)d\rho \\
&=  \int_{\partial B_{1}} \left[\int_{R_2}^\infty v(\rho w) \left(\frac{d}{d\rho}\int_{R_{2}}^\rho f(y- \gamma w)  \gamma^{d-1}~d\gamma\right) S_1(dw) \right] d\rho.
\end{align*}
By integration by parts and the assumption on $v$, for almost all $w$,
\begin{align*}
&\int_{R_2}^\infty v(\rho w) \left(\frac{d}{d\rho}\int_{R_2}^\rho f(y- \gamma w)  \gamma^{d-1}~d\gamma \right) d\rho     \\
&=-\int_{R_2}^\infty (\nabla v(\rho w), w)\int_{R_2}^\rho f(y- \gamma w)  \gamma^{d-1}~d\gamma d\rho.
\end{align*}
In the above we use the fact that there exists a sequence $\rho_n \to \infty$, which might be dependent on $w$, so that $v(\rho_n w)\to 0$ as $n\to \infty$ and that $\int^{\rho}_{R_2} f(y-\gamma w)\gamma^{d-1} d\gamma $ is a bounded function of $\rho$. Also note that the limits of two improper integrals exist since the first one is actually an integral over finite interval.

By   the assumption $|x-y| \leq R_1$,  for  any $\rho > R_2$
\begin{align*}
\int_{B_\rho} f^2(y-z)~dz =\int_{B_\rho(y)} f^2(z)~dz
&\leq \int_{B_{R_1+\rho}(x)} f^2(z)~dz \\
&\leq  N(d)(R_1+\rho)^d \bM^{R_1+R_2}_x f^2(x).
\end{align*}
Finally using Fubini's theorem, H\"older's inequality, and the assumption that $f(y-z)=0$ if $|z| \leq R_2$, we get
\begin{align*}
 &|(f \ast v) (y) |
 \\&\leq \Big|\int_{R_2}^\infty \int_{\partial B_1}(\nabla v(\rho w), w)\int_{R_2}^\rho f(y- \gamma w)  \gamma^{d-1}~d\gamma S_1(dw)d\rho \Big|\\
&\leq \int_{R_2}^\infty \Big(\int_{\partial B_1} \int_{R_2}^\rho \Big|(\nabla v(\rho w), w)\Big|^2 \gamma^{d-1}d\gamma S_1(dw) \Big)^{1/2}\\
&\hspace{40mm}\times\Big(\int_{\partial B_1}\int_{R_2}^\rho f^2(y- \gamma w)  \gamma^{d-1}~d\gamma S_1(dw)\Big)^{1/2}d\rho \\
&\leq \int_{R_2}^\infty \rho^{d/2}\Big(\int_{\partial B_1} \Big|(\nabla v(\rho w), w)\Big|^2 S_1(dw) \Big)^{1/2} \Big(\int_{ |z| \leq \rho } f^2(y- z)  ~dz\Big)^{1/2}d\rho \\
&\leq N \big( \bM^{R_1+R_2}_xf^2(x) \big)^{1/2}\int_{R_2}^\infty (R_1+\rho)^d\Big(\int_{\partial B_1} \Big|(\nabla v(\rho w), w)\big|^2 S_1(dw) \Big)^{1/2}d\rho.
\end{align*}
The lemma is proved.
\end{proof}

For $r_1,r_2>0$ denote
$$
Q_{r_2,r_1} := (-2r_2,0) \times B_{r_1}.
$$
 \begin{lemma}          \label{lemma3}
Suppose there exist constants $\sigma,\kappa>0$ and $\mu>d+2$ so that
\begin{equation}
                                                         \label{as2'}
\big|D_xK(t,s,x) \big| \leq C  \big| (t-s)^{-\sigma}
F_1\big(t,s,(t-s)^{-\kappa} x\big) \big|,
\end{equation}
\begin{equation}                                       \label{theta1range2}
-2{\sigma} + {\kappa}(\mu +d ) >-1,
\end{equation}
and
\begin{equation}
                                                         \label{finite1'}
H_{1,K}(\mu):=\sup_{s<t}\int_{|x| > r_1 (s-r)^{-\kappa}
}|x|^{\mu}|F_{1}(t,s,x)|^{2}dx < \infty.
\end{equation}
Let $f\in C^{\infty}_0(\fR^{d+1},H)$ with support in $(-10 r_2, 10
r_2) \times \fR^d \setminus B_{2r_1}$. Then for any $x \in B_{r_1}$
we have
$$
\int_{Q_{r_2,r_1}} |\cG f(s,y)|^2 ~dsdy \leq  N H_{1,K}(\mu) r_1^{\mu}   r_2^{-2{\sigma}+{\kappa}(\mu+d)+1} \int_{ - 10r_2}^0  \bM^{3r_1}_x |f|_H^2(s,x)ds,
$$
 where $N=N(d,\mu,{\sigma},{\kappa},C_0,C)$.

\end{lemma}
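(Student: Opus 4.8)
The plan is to apply Lemma \ref{lemma2} pointwise in the time variables with $v=K(t,s,\cdot)$, which turns the convolution into a product of a truncated maximal function of $|f|_H$ and a ``kernel integral''; then to bound that kernel integral via \eqref{as2'} and polar coordinates, bringing $H_{1,K}(\mu)$ in through Cauchy--Schwarz; and finally to integrate in $s$ and $t$, using \eqref{theta1range2} for integrability in time. Fix $x\in B_{r_1}$. For $(t,y)\in Q_{r_2,r_1}$ and $s<t$ one has $|x-y|<2r_1$ (since $|x|,|y|<r_1$), and, as $f(s,\cdot)$ is supported outside $B_{2r_1}$, the map $z\mapsto f(s,y-z)$ vanishes on $B_{r_1}$. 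Hence Lemma \ref{lemma2} applies with $R_1=2r_1$, $R_2=r_1$ (so $R_1+R_2=3r_1$) and $v=K(t,s,\cdot)$; although it is stated for scalar $f$, its proof goes through verbatim for $H$-valued $f$ after replacing $f^2$ by $|f|_H^2$ (here the differentiated factor $v$ is scalar), and it gives, for all such $(t,y,s)$,
\begin{align*}
\big|K(t,s,\cdot)\ast f(s,\cdot)(y)\big|_H\le N(d)\,\big(\bM^{3r_1}_x|f(s,\cdot)|_H^2(x)\big)^{1/2}\,\cI(t,s),
\end{align*}
where $\cI(t,s):=\int_{r_1}^\infty (2r_1+\rho)^d\left(\int_{\partial B_1}\big|\big(\nabla_x K(t,s,\rho w),w\big)\big|^2\,S_1(dw)\right)^{1/2}d\rho$.

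To estimate $\cI(t,s)$ I would insert \eqref{as2'} in the form $\big|\big(\nabla_x K(t,s,\rho w),w\big)\big|\le |D_x K(t,s,\rho w)|\le C(t-s)^{-\sigma}\big|F_1(t,s,(t-s)^{-\kappa}\rho w)\big|$, together with $(2r_1+\rho)^d\le 3^d\rho^d$ for $\rho\ge r_1$. Passing to polar coordinates in the argument of $F_1$ and substituting $\rho\mapsto (t-s)^{-\kappa}\rho$ reduces $\cI(t,s)$, up to powers of $t-s$, to $\int_{r_1(t-s)^{-\kappa}}^\infty u^d\big(\int_{\partial B_1}|F_1(t,s,uw)|^2\,S_1(dw)\big)^{1/2}du$. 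Writing $u^d=u^{(\mu+d-1)/2}\cdot u^{(d+1-\mu)/2}$ and applying Cauchy--Schwarz, the first factor is controlled by $H_{1,K}(\mu)^{1/2}$ thanks to \eqref{finite1'}, while the integral of the second factor is finite precisely because $\mu>d+2$ and contributes a power of $r_1(t-s)^{-\kappa}$. Collecting the resulting powers yields a bound of the shape $\cI(t,s)^2\le N\,H_{1,K}(\mu)\,r_1^{\,a}\,(t-s)^{-2\sigma+\kappa(\mu+d)}$ with $N=N(d,\mu,\sigma,\kappa,C)$ and an explicit exponent $a$ of $r_1$.

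Finally I would square the pointwise bound, integrate in $y$ over $B_{r_1}$ (a factor $|B_{r_1}|$), then in $s$ over $(-\infty,t)$ — which, by the support of $f$ and $t<0$, reduces to $(-10r_2,t)$ — and then in $t$ over $(-2r_2,0)$. By Fubini one is left with $H_{1,K}(\mu)$ times a power of $r_1$ times $\Big(\int_{-2r_2}^0\!\int_{-10r_2}^0 I_{s<t}\,(t-s)^{-2\sigma+\kappa(\mu+d)}\,ds\,dt\Big)$ times $\int_{-10r_2}^0\bM^{3r_1}_x|f|_H^2(s,x)\,ds$. Condition \eqref{theta1range2} says exactly that $-2\sigma+\kappa(\mu+d)>-1$, so the double integral in $(s,t)$ converges and is bounded by a dimensional constant times $r_2^{\,-2\sigma+\kappa(\mu+d)+1}$; collecting the powers of $r_1$ and $r_2$ gives the asserted inequality. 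I expect the estimate of $\cI(t,s)$ to be the main obstacle: one must split the power of $u$ (equivalently of $\rho$) so that exactly the weight $|x|^{\mu}$ sits on the factor controlled by $H_{1,K}(\mu)$, in such a way that the leftover integral both converges at infinity — this forces $\mu>d+2$ — and leaves a power of $t-s$ that remains integrable over a time interval of length $O(r_2)$ — this forces \eqref{theta1range2}; the remaining bookkeeping of the powers of $r_1$ and $r_2$ is routine.
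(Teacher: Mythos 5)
Your strategic plan matches the paper's proof step for step: apply Lemma~\ref{lemma2} with $R_1=2r_1$, $R_2=r_1$ pointwise in the two time variables; pass to polar coordinates and rescale $\rho\mapsto(t-s)^{-\kappa}\rho$ using \eqref{as2'}; split $\rho^d=\rho^{(d+1-\mu)/2}\cdot\rho^{(\mu+d-1)/2}$ and use Cauchy--Schwarz to bring in $H_{1,K}(\mu)$ (the first factor being where $\mu>d+2$ enters); and integrate in $y,s,t$ using \eqref{theta1range2} for the time singularity. Your observation that the lemma extends verbatim to $H$-valued $f$ (with $f^2$ replaced by $|f|_H^2$, the kernel $v$ staying scalar) is correct and actually slightly cleaner than the paper's route, which first applies Minkowski's inequality to reduce to the scalar function $|f|_H$ before invoking Lemma~\ref{lemma2}.

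However, the final Fubini step is not correct as written. You claim that after Fubini the expression becomes a \emph{product}
$H_{1,K}(\mu)\cdot r_1^{a}\cdot\bigl(\int_{-2r_2}^0\int_{-10r_2}^0 I_{s<t}(t-s)^{\alpha}\,ds\,dt\bigr)\cdot\bigl(\int_{-10r_2}^0\bM^{3r_1}_x|f|_H^2(s,x)\,ds\bigr)$ with $\alpha=-2\sigma+\kappa(\mu+d)$. The maximal function $\bM^{3r_1}_x|f|_H^2(s,x)$ is a genuine function of the time variable and cannot be pulled outside the time integral as a separate multiplicative factor. Moreover your evaluation of the displayed double integral is off by one power: since the $(t-s)$-integral alone already produces $r_2^{\alpha+1}$, the full double integral is of order $r_2^{\alpha+2}$, not $r_2^{\alpha+1}$. (The two slips happen to cancel, which is why your final exponent on $r_2$ agrees with the stated one.) The correct step — which is what the paper does — is to swap the order of integration, bound the \emph{inner} integral $\int_{\max(r,-2r_2)}^{0}(t-r)^{\alpha}\,dt\le N r_2^{\alpha+1}$ pointwise in $r$ using \eqref{theta1range2} and $|r|\le 10r_2$, and only then integrate in $r$ against the maximal function. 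With that repair, your argument reproduces the paper's proof exactly.
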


\begin{proof} Let $x \in B_{r_1}$, $(s,y) \in Q_{r_2,r_1}$ and $r\leq s$. Then
$|x-y| \leq 2r_1$, and $|z| \leq r_1$ implies $|y-z| \leq  2r_1$ and
$f(r,y-z)=0$ due to the assumption on $f$. Therefore,
\begin{align*}
|K(s,r, \cdot) \ast f(r, \cdot)(y)|_H \leq \int_{|z|\geq r_{1}}|K(s,r,z)||f|_H(r,y-z) ~dz.
\end{align*}
 Applying Lemma \ref{lemma2} with $R_1=2r_1$ and $R_2=r_1$, we get
\begin{align}
&|K(s,r,\cdot) \ast f(r, \cdot)(y)|_H^2  \nonumber\\
&\leq  N\bM^{3r_1}_x |f|_H^2(r,x) \left(\int_{r_1}^\infty (2r_1 + \rho)^d
 \big[\int_{\partial B_{1}} \Big| \nabla K (s,r,\rho w) \Big|^2S_{1}(dw) \big]^{1/2} d\rho \right)^2 \nonumber\\
&\leq  N\bM^{3r_1}_x |f|_H^2(r,x) \left(\int_{r_1}^\infty  \rho^d
\big[\int_{\partial B_{1}} \Big| \nabla K (s,r,\rho
w)\Big|^2S_{1}(dw) \big]^{1/2} d\rho \right)^2.       \label{eqn 2.6.1}
\end{align}
By \eqref{as2'} and the change of variable $(s-r)^{-\kappa}\rho \to \rho$, the last term is less than or equal to constant
times of
$$
 (s-r)^{-2{\sigma}+2\kappa(d+1)} \bM^{3r_1}_x |f|_H^2(r,x)  \left(\int_{ r_1 (s-r)^{-\kappa}  }^\infty  \rho^d
\big[\int_{\partial B_{1}} \Big| F_1 (s,r,\rho w)\Big|^2 S_{1}(dw) \big]^{1/2} d\rho \right)^2.
 $$
 By H\"older inequality and the definition of $H_{1,K}(\mu)$,
 \begin{align*}
&\left(\int_{ r_1 (s-r)^{-\kappa}  }^\infty  \rho^d\big[\int_{\partial B_{1}} \Big| F_1 (s,r,\rho w)\Big|^2 S_{1}(dw) \big]^{1/2} d\rho \right)^2\\
&\leq \left(\int^{\infty}_{r_{1}(s-r)^{-\kappa}} \rho^{d+1-\mu}
~d\rho\right)\cdot
 \left(\int^{\infty}_{r_{1}(s-r)^{-\kappa}}\int_{\partial B_{1}}
  \rho^{\mu+d-1}\Big| F_{1} (s,r,\rho w)\Big|^2 S_{1}(dw)  d\rho\right)  \\
&\leq  N r_{1}^{d+2-\mu}(s-r)^{\kappa(\mu-d-2)} H_{1,K}(\mu) .
\end{align*}
Coming back to (\ref{eqn 2.6.1}) and remembering the definition of $\cG f$, we get
\begin{align*}
&   \int_{Q_{r_2,r_1}} |\cG f(s,y)|^2 ~dsdy\\
&\leq  N H_{1,K}(\mu) r_1^{2d+2-\mu} \int_{ - 2r_2}^0 \int_{ -10r_2}^{s} (s-r)^{-2{\sigma}+{\kappa}(\mu+d)} \bM^{3r_1}_x |f|_H^2(r,x)dr ds \\
&\leq  N H_{1,K}(\mu) r_1^{2d+2-\mu}  \int_{ - 10r_2}^0 \left[\int_{r }^0 (s-r)^{-2{\sigma}+{\kappa}(\mu+d)} ds\right] \bM^{3r_1}_x |f|_H^2(r,x) dr \\
&\leq  N H_{1,K}(\mu) r_1^{2d+2-\mu}  \int_{ - 10r_2}^0  (-r)^{-2{\sigma}+{\kappa}(\mu+d)+1} \bM^{3r_1}_x |f|_H^2(r,x) dr \\
&\leq  N H_{1,K}(\mu) r_1^{2d+2-\mu}   r_2^{-2{\sigma}+{\kappa}(\mu+d)+1} \int_{ - 10r_2}^0  \bM^{3r_1}_x |f|_H^2(r,x) dr.
\end{align*}
The lemma is proved.
\end{proof}

Recall that $\Theta(\theta,\vartheta):=\theta d-2\vartheta$.

\begin{lemma}
\label{lemma3-2}
Suppose that
\begin{align}            \label{as3-1'}
\big|D^2_xK(t,s,x) \big| \leq C \Big((t-s)^{-\sigma} \big|F_2(t,s,(t-s)^{{-\kappa}} x) \big| \wedge (t-s)^{{-c}} \Big)
\end{align}
holds with some constants $\sigma,\kappa,c>0$ and there exists $\delta > 0$ such that
\begin{align*}
r_2^{\delta} = r_1,\quad \Theta(2\delta, c-\delta)<-1.
\end{align*}
Moreover  assume that there exists $\mu>d+2$ so that
 \begin{equation}               \label{theta2range2}
\Theta(\kappa+\delta,\sigma-\delta)-(\delta-\kappa)\mu < -1,
\end{equation}
and
$$
H_{2,K}(\mu):=\sup_{r\leq s}\int_{|x| \geq (s-r)^{\delta-\kappa} } |x|^{\mu}| F_2(s,r,x)|^2 dx<\infty.
$$
Let $f\in C^{\infty}_0(\fR^{d+1},H)$, and $f(t,x)=0$ for $t \geq -8r_2$. Then for any $(t,x) \in Q_{r_2,r_1}$ we have
\begin{align*}
\notag &\sup_{Q_{r_2,r_1}}  | \nabla \cG f|^2 \\
&\leq  N \Big(H_{2,K}(\mu)r_2^{\Theta(\kappa+\delta,\sigma-\delta)-(\delta-\kappa)\mu+1} \wedge r_{2}^{\Theta(2\delta,c-\delta)+1}\Big)\bM^{6r_2}_t \bM^{2r_1}_x |f|_H^2(t,x),
\end{align*}
where $N=N(d,\mu,\delta,c,\sigma,\kappa,C_0,C)$.
\end{lemma}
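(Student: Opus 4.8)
\medskip
\noindent\textbf{Proof plan.} The first step is to take the gradient inside. Since $\cG f\ge 0$ and $\cG f(t,x)^{2}=\int_{-\infty}^{t}|K(t,s,\cdot)\ast f(s,\cdot)(x)|_{H}^{2}\,ds$ with the upper limit independent of $x$, differentiation under the integral and the Cauchy--Schwarz inequality in $s$ give, for $(t,y)\in Q_{r_2,r_1}$,
\begin{align*}
|\nabla_{x}\cG f(t,y)|^{2}\le\int_{-\infty}^{t}\big|\nabla_{x}\big(K(t,s,\cdot)\ast f(s,\cdot)\big)(y)\big|_{H}^{2}\,ds=\int_{-\infty}^{t}\big|\big(D_{x}K(t,s,\cdot)\big)\ast f(s,\cdot)(y)\big|_{H}^{2}\,ds.
\end{align*}
Because $f(t,x)=0$ for $t\ge -8r_{2}$ while $t>-2r_{2}$ on $Q_{r_2,r_1}$, only $s<-8r_{2}$ contributes, so that $t-s>6r_{2}$ throughout the integral; this is the sole use of the hypothesis on the time-support of $f$, and it accounts for the radius $6r_{2}$ in the statement.

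Next, fix $(t,y)\in Q_{r_2,r_1}$ and the reference point $x\in B_{r_1}$, so $|x-y|<2r_{1}$, and for each $s<-8r_{2}$ apply Lemma \ref{lemma2} with $v=D_{x}K(t,s,\cdot)$, $R_{1}=2r_{1}$, $R_{2}=0$ (no spatial cut-off of $f$ is available, and $R_{1}+R_{2}=2r_{1}$ is all that is needed). Since $\nabla_{x}(D_{x}K)=D^{2}_{x}K$, this produces
\begin{align*}
\big|\big(D_{x}K(t,s,\cdot)\big)\ast f(s,\cdot)(y)\big|_{H}^{2}\le N\,\bM^{2r_1}_{x}|f|_{H}^{2}(s,x)\,J(t,s)^{2},\qquad J(t,s):=\int_{0}^{\infty}(2r_{1}+\rho)^{d}\Big(\int_{\partial B_{1}}\big|D^{2}_{x}K(t,s,\rho w)\big|^{2}S_{1}(dw)\Big)^{1/2}d\rho.
\end{align*}
(Two minor technical points enter here: differentiation under the integral is legitimate because $f\in C^{\infty}_{0}$, and the requirement in Lemma \ref{lemma2} that $D_{x}K(t,s,\rho w)\to0$ along a sequence $\rho\to\infty$ follows from (\ref{as3-1'}) together with $H_{2,K}(\mu)<\infty$.)

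The core of the argument is the estimate of $J(t,s)$, carried out by splitting the $\rho$-integral at $\rho_{0}:=(t-s)^{\delta}$. Because $t-s>6r_{2}>r_{2}$ and $r_{2}^{\delta}=r_{1}$, one has $\rho_{0}>r_{1}$, hence $2r_{1}+\rho<3\rho$ on $\{\rho\ge\rho_{0}\}$ and $2r_{1}+\rho<3\rho_{0}$ on $\{\rho<\rho_{0}\}$. On $\{\rho<\rho_{0}\}$ use the uniform bound $|D^{2}_{x}K|\le C(t-s)^{-c}$ from (\ref{as3-1'}): the contribution is at most $N(t-s)^{-c}\rho_{0}^{\,d+1}=N(t-s)^{\delta(d+1)-c}$, whose square is $N(t-s)^{\Theta(2\delta,c-\delta)}$ by the additivity of $\Theta$ (see (\ref{delta01})). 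On $\{\rho\ge\rho_{0}\}$ use $|D^{2}_{x}K(t,s,\rho w)|\le C(t-s)^{-\sigma}|F_{2}(t,s,(t-s)^{-\kappa}\rho w)|$, substitute $\rho\mapsto(t-s)^{\kappa}\rho$ so that the lower limit of integration becomes exactly the threshold $(t-s)^{\delta-\kappa}$ appearing in $H_{2,K}(\mu)$, and then apply the Cauchy--Schwarz inequality in $\rho$ against the weight $\rho^{\mu}$; the convergence of $\int_{(t-s)^{\delta-\kappa}}^{\infty}\rho^{\,d-\mu+1}\,d\rho$ uses $\mu>d+2$, and a short computation of the exponent (again via (\ref{delta01})) shows this contribution is at most $N(t-s)^{\frac12(\Theta(\kappa+\delta,\sigma-\delta)-(\delta-\kappa)\mu)}H_{2,K}(\mu)^{1/2}$. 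Collecting the two pieces bounds $J(t,s)^{2}$ by $N$ times $\big((t-s)^{\Theta(2\delta,c-\delta)}\big)\wedge\big((t-s)^{\Theta(\kappa+\delta,\sigma-\delta)-(\delta-\kappa)\mu}H_{2,K}(\mu)\big)$, each of the two bounds coming from the branch of the $\wedge$ in (\ref{as3-1'}) suited to the relevant part of the integral.

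Finally, insert this into the first display, write $g(s):=\bM^{2r_1}_{x}|f|_{H}^{2}(s,x)$ and set $\lambda=t-s$, and use the familiar device for decreasing kernels. Each of the two powers $\Psi(\lambda)$ just obtained is nonincreasing with exponent $<-1$ --- by the hypothesis $\Theta(2\delta,c-\delta)<-1$ and by (\ref{theta2range2}), respectively --- so writing $\Psi(\lambda)=\int_{\lambda}^{\infty}(-\Psi'(r))\,dr$, applying Fubini, and enlarging the intervals $(t-r,-8r_{2})\subset(t-r,t+r)$ (legitimate since $r>t+8r_{2}>6r_{2}$) bounds $\int_{-\infty}^{-8r_{2}}g(s)\Psi(t-s)\,ds$ by $\big(\int_{t+8r_{2}}^{\infty}(-\Psi'(r))\,2r\,dr\big)\bM^{6r_2}_{t}g(t)$; an integration by parts together with $6r_{2}<t+8r_{2}<8r_{2}$ turns the first factor into $N r_{2}^{\Theta(2\delta,c-\delta)+1}$, resp. $N r_{2}^{\Theta(\kappa+\delta,\sigma-\delta)-(\delta-\kappa)\mu+1}H_{2,K}(\mu)$. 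Since $\bM^{6r_2}_{t}\bM^{2r_1}_{x}|f|_{H}^{2}(t',x)\le N\bM^{6r_2}_{t}\bM^{2r_1}_{x}|f|_{H}^{2}(t,x)$ for all $t,t'\in(-2r_{2},0)$, taking the supremum over $(t,y)\in Q_{r_2,r_1}$ yields the assertion. The principal difficulty is the exponent bookkeeping in the previous paragraph: checking that twice the exponent produced by the $F_{2}$-tail equals $\Theta(\kappa+\delta,\sigma-\delta)-(\delta-\kappa)\mu$, and that the single scale $(t-s)^{\delta}$ simultaneously exceeds $2r_{1}$ (via $r_{2}^{\delta}=r_{1}$ and $t-s>6r_{2}$) and matches, after the rescaling, the lower cut-off in the definition of $H_{2,K}(\mu)$.
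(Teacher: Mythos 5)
Your proof follows the paper's own argument step for step: Minkowski to move the gradient inside $\cG f$, Lemma \ref{lemma2} with $R_1=2r_1$, $R_2=0$, the split of the $\rho$-integral at $\rho_0=(t-s)^\delta$ so that the $F_2$-branch of \eqref{as3-1'} controls $\{\rho\ge\rho_0\}$ and the uniform branch controls $\{\rho<\rho_0\}$, and then an integration-by-parts/Fubini device to convert the resulting $s$-integral into $\bM^{6r_2}_t$ of the spatial maximal function. The exponent bookkeeping you carry out (doubling the tail exponent to recover $\Theta(\kappa+\delta,\sigma-\delta)-(\delta-\kappa)\mu$, and $\Theta(2\delta,c-\delta)$ from the near part) matches \eqref{delta01}, so this is essentially identical to the paper's proof.
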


\begin{proof}
Let $(t,x),(s,y) \in Q_{r_2,r_1}$ and $r\leq s$.
By Minkowski's inequality
$$
\left|\frac{\|f(s+h, \cdot )\| - \|f(s, \cdot)\|}{h} \right|
\leq
\frac{\|f(s+h, \cdot ) - f(s, \cdot)\|}{|h|},
$$
the derivative of a norm is less than or
equal to the norm of the derivative  if both exist. Thus,
\begin{align*}
\Big|\frac{\partial}{\partial x_i}\cG f(s,y) \Big|
&=\Big|\frac{\partial}{\partial x_i} \Big( \int_{-\infty}^s  |K(s,r,\cdot) \ast f(r,\cdot)(y)|_H^2 dr \Big)^{1/2} \Big| \\
&\leq \Big(\int_{-\infty}^s \Big|\frac{\partial}{\partial x_i} K(s,r, \cdot) \ast  f(r,\cdot)(y)\Big|_H^2 dr\Big)^{1/2}.
\end{align*}
Applying Lemma \ref{lemma2}  with $R_{1}=2r_{1}$ and  $R_{2}=0$ we get
\begin{align*}
\Big|\frac{\partial}{\partial x_i} K(s,r,\cdot) \ast f(r, \cdot)(y)\Big|_H^2 \leq N\bM_{x}^{2r_1}|f|_H^2(r,x)(\cI_{1}^{2}+\cI_{2}^{2})
\end{align*}
where
$$
\cI_{1} = \int_{(s-r)^{\delta}}^\infty (2r_1 + \rho)^d \big(\int_{\partial B_{1}} \Big| D^2_x K (s,r,\rho w)\Big|^2S_{1}(dw) \big)^{1/2} d\rho,
$$
$$
\cI_{2} = \int_0^{(s-r)^{\delta}} (2r_1 + \rho)^d \big(\int_{\partial B_{1}} \Big| D^2_xK (s,r,\rho w)\Big|^2S_{1}(dw) \big)^{1/2} d\rho
$$

$\\$ Thus,
$$
\Big|\frac{\partial}{\partial x_i}\cG(s,y) \Big|^2 \leq N\int_{-\infty}^s \bM_{x}^{2r_{1}}|f|_{H}^{2}(r,x)(\cI_1^{2} + \cI_2^{2})~dr.
$$
Since $f(r,x)=0$ if $r \geq -8r_2$, we may assume $r < -8r_2$.
So
\begin{align}           \label{r2r1r}
|s-r|^\delta \geq 6r^\delta_2 =6r_1.
\end{align}
First, we estimate $\cI_1$. Due to \eqref{r2r1r} and \eqref{as3-1'},
\begin{align*}
\cI_1 &= \int_{(s-r)^{\delta}}^\infty (2r_1+ \rho)^d \big(\int_{\partial B_{1}} \Big| D^2_xK (s,r, \rho w) \Big|^2 S_{1}(dw) \big)^{1/2} d\rho \\
&\leq  N\int_{(s-r)^{\delta}}^\infty  \rho^d \big(\int_{\partial B_{1}} \Big| D^2_x K (s,r,\rho w)\Big|^2 S_{1}(dw) \big)^{1/2} d\rho \\
&\leq  N(s-r)^{-\sigma}\int_{(s-r)^{\delta}}^\infty  \rho^d \big(\int_{\partial B_{1}} |F_2 (s,r,(s-r)^{{-\kappa}} \rho w)|^2 S_{1}(dw) \big)^{1/2} d\rho.
\end{align*}
By  the change of variable $(s-r)^{-\kappa}\rho \to \rho$, the last therm  is less than or  equal to
\begin{align*}
&N(s-r)^{{-\sigma}+{\kappa} (d+1)}\int_{ (s-r)^{\delta-\kappa} }^\infty   \rho^d \big(\int_{\partial B_{1}} | F_2 (s,r,\rho w)|^2 S_{1}(dw) \big)^{1/2} d\rho \\
&\leq  N(s-r)^{{-\sigma}+{\kappa} (d+1)}
\left[\int_{(s-r)^{\delta-\kappa}}\rho^{d -\mu+1} d\rho \right]^{1/2}
 \left[\int_{|z|\geq (s-r)^{\delta-\kappa}}|z|^{\mu}|  F_2 (s,r,z)|^2 dz \right]^{1/2}\\
&\leq  N(s-r)^{\frac{\Theta(\kappa+\delta,\sigma-\delta)-(\delta-\kappa)\mu}{2}} \left[\int_{|z|\geq (s-r)^{\delta-\kappa}}|z|^{\mu}|  F_2 (s,r,z)|^2 dz \right]^{1/2}\\
&\leq  N H_{2,K}^{1/2}(\mu) (s-r)^{\frac{\Theta(\kappa+\delta,\sigma-\delta)-(\delta-\kappa)\mu}{2}}.
\end{align*}

Note $|s-r|\geq r/2$ for $r\leq -8r_2$. Thus, by the integration by parts and the assumption on $f$,
\begin{align*}
&\int_{-\infty}^s \bM_{x}^{2r_{1}}|f|_{H}^{2}(r,x)\cI_1^{2}~dr\\
&\leq  NH_{2,K}(\mu)\int_{-\infty}^{-8 r_2} (s-r)^{\Theta(\kappa+\delta,\sigma-\delta)-(\delta-\kappa)\mu}\bM_{x}^{2r_1} |f|_H^2(r,x)~dr\\
&\leq  NH_{2,K}(\mu)\int_{-\infty}^{-8 r_2} |r|^{\Theta(\kappa+\delta,\sigma-\delta)-(\delta-\kappa)\mu-1} \left[\int_r^0\bM_{x}^{2r_1} |f|_H^2(\bar{s},x)~d\bar{s}\right]dr\\
&\leq  N H_{2,K}(\mu)\bM^{6r_2}_t \bM_{x}^{2r_1} |f|_H^2(t,x) \int_{-\infty}^{-8 r_2} |r|^{\Theta(\kappa+\delta,\sigma-\delta)-(\delta-\kappa)\mu} ~dr\\
&\leq  N H_{2,K}(\mu)r_2^{\Theta(\kappa+\delta,\sigma-\delta)-(\delta-\kappa)\mu+1} \bM^{6r_2}_t \bM_{x}^{2r_1} |f|_H^2(t,x).
\end{align*}

$\\$ Next we estimate $\cI_2$. Using \eqref{as3-1'} and \eqref{r2r1r},
\begin{align*}
\cI_2 &\leq \int_0^{(s-r)^{\delta}} \big(2 r_1+ \rho\big)^d \big[\int_{\partial B_{1}} \Big| D^2_xK (s,r, \rho w)\Big|^2 S_{1}(dw) \big]^{1/2} \,d\rho \\
&\leq  N(s-r)^{{-c}} \int_0^{(s-r)^{\delta}}  \big(2r_1 +\rho\big)^d d\rho \leq  N(s-r)^{\frac{\Theta(2\delta,c-\delta)}{2}}.
\end{align*}
Applying the integration by parts again, we obtain
\begin{align*}
\int_{-\infty}^s &\bM_{x}^{2r_{1}}|f|_{H}^{2}(r,x)\cI_2  ^{2}~dr \\
&\leq  N\int_{-\infty}^{-8 r_2} (s-r)^{\Theta(2\delta,c-\delta)}\bM_{x}^{2r_1} |f|_H^2(r,x)~dr\\
&\leq  N\int_{-\infty}^{-8 r_2} |r|^{\Theta(2\delta,c-\delta)-1} \left[\int_r^0\bM_{x}^{2r_1} |f|_H^2(\bar{s},x)~d\bar{s}\right]dr\\
&\leq  N \bM^{6r_2}_t \bM_{x}^{2r_1} |f|_H^2(t,x) \int_{-\infty}^{-8 r_2} |r|^{\Theta(2\delta,c-\delta)}   ~dr\\
&\leq  N r_2^{\Theta(2\delta,c-\delta)+1} \bM^{6r_2}_t \bM_{x}^{2r_1} |f|_H^2(t,x).
\end{align*}
Finally, we get
\begin{align*}
&\Big|\frac{\partial}{\partial x_i}\cG f(s,y)\Big|^2\\
&\leq N \Big(H_{2,K}(\mu)r_2^{\Theta(\kappa+\delta,\sigma-\delta)-(\delta-\kappa)\mu+1} \wedge r_2^{\Theta(2\delta,c-\delta)+1}\Big) \bM^{6r_2}_t \bM_{x}^{2r_1} |f|_H^2(t,x).
\end{align*}
The lemma is proved.
\end{proof}

\begin{lemma}
\label{lemma4}
Suppose that
$$
\left|\frac{\partial^2}{ \partial x \partial t}K(t,s,x) \right| \leq C \Big((t-s)^{{-\sigma}} \big| F_3(t,s,(t-s)^{{-\kappa}} x) \big| \wedge (t-s)^{{-c}}\Big)
$$
holds with some constants $\sigma,\kappa,c>0$ and there exists a constant $\delta >0 $ such that
\begin{align*}
r_2^{\delta} = r_1,\quad \Theta(2\delta,c-\delta)<-1.
\end{align*}
Moreover  assume that there exists $\mu>d+2$ so that
\begin{align*}
\Theta(\kappa+\delta,\sigma-\delta)-(\delta-\kappa)\mu < -1,
\end{align*}
and
\begin{align*}
H_{3,K}(\mu):=\sup_{r\leq s}\int_{|x| \geq (s-r)^{\delta-\kappa} } |x|^{\mu}| F_3(s,r,x)|^2 dx<\infty.
\end{align*}
Let $f\in C^{\infty}_0(\fR^{d+1},H)$ and $f(t,x)=0$ for $t \geq -8r_2$. Then for any $(t,x) \in Q_{r_2,r_1}$ we have
\begin{align*}
\notag & \sup_{Q_{r_2,r_1}}  |D_t \cG f|^2   \\
 &\leq  N \Big( H_{3,K}(\mu)r_2^{\Theta(\kappa+\delta,\sigma-\delta)-(\delta-\kappa)\mu+1} \wedge  r_2^{\Theta(2\delta,c-\delta)+1}\Big) \bM^{6r_2}_t \bM_{x}^{2r_1} |f|_H^2(t,x),
\end{align*}
where $N=N(d,\mu,\delta,c,\sigma,\kappa,C_{0},C)$.
\end{lemma}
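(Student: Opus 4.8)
The plan is to copy the proof of Lemma \ref{lemma3-2} almost verbatim; the only genuinely new issue is how the $t$-derivative interacts with the upper limit of the integral defining $\cG f$, and once that is dealt with the rest is a transcription with $F_2,H_{2,K}$ replaced by $F_3,H_{3,K}$. First I would fix $(s,y)\in Q_{r_2,r_1}$. Since $f(r,\cdot)\equiv 0$ for $r\ge -8r_2$ and $s>-2r_2>-8r_2$, the function $r\mapsto|K(s,r,\cdot)\ast f(r,\cdot)(y)|_H$ vanishes on $(-8r_2,s)$, so
\[
\cG f(s,y)=\Big(\int_{-\infty}^{-8r_2}\big|K(s,r,\cdot)\ast f(r,\cdot)(y)\big|_H^2\,dr\Big)^{1/2},
\]
an $L_2(dr)$-norm over a range that does \emph{not} depend on $s$. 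Differentiating under this now-fixed integral and using, as in Lemma \ref{lemma3-2}, that the derivative of a norm is at most the norm of the derivative, I get
\[
\big|D_s\cG f(s,y)\big|\le\Big(\int_{-\infty}^{-8r_2}\big|\tfrac{\partial}{\partial s}K(s,r,\cdot)\ast f(r,\cdot)(y)\big|_H^2\,dr\Big)^{1/2}.
\]

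Next I would apply Lemma \ref{lemma2} to each convolution $\tfrac{\partial}{\partial s}K(s,r,\cdot)\ast f(r,\cdot)(y)$ with $R_1=2r_1$ and $R_2=0$ (note $|x-y|\le 2r_1$ for $x\in B_{r_1}$); the ``gradient of the kernel'' that Lemma \ref{lemma2} requires is $\nabla_x\big(\tfrac{\partial}{\partial s}K(s,r,\cdot)\big)=\tfrac{\partial^2}{\partial x\,\partial s}K(s,r,\cdot)$, which is exactly what the hypothesis controls. This yields, just as in Lemma \ref{lemma3-2},
\[
\big|\tfrac{\partial}{\partial s}K(s,r,\cdot)\ast f(r,\cdot)(y)\big|_H^2\le N\,\bM^{2r_1}_x|f|_H^2(r,x)\,(\cI_1^2+\cI_2^2),
\]
where $\cI_1,\cI_2$ are the integrals of $\big(\int_{\partial B_1}\big|\tfrac{\partial^2}{\partial x\,\partial s}K(s,r,\rho w)\big|^2S_1(dw)\big)^{1/2}$ against $(2r_1+\rho)^d$ over $\rho>(s-r)^\delta$ and over $0<\rho<(s-r)^\delta$, respectively. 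Only $r<-8r_2$ contributes, so $s-r\ge 6r_2$, hence $(s-r)^\delta\ge r_1$ and $s-r\asymp|r|$ there; I would then bound $\cI_1$ via the first pointwise estimate on $\tfrac{\partial^2}{\partial x\partial s}K$, the parabolic rescaling $(s-r)^{-\kappa}\rho\mapsto\rho$, H\"older's inequality in $(\rho,w)$, $\mu>d+2$ and $H_{3,K}(\mu)<\infty$, getting $\cI_1^2\le N\,H_{3,K}(\mu)(s-r)^{\Theta(\kappa+\delta,\sigma-\delta)-(\delta-\kappa)\mu}$, and $\cI_2$ via the second estimate $(s-r)^{-c}$ together with $(2r_1+\rho)^d\le N(s-r)^{\delta d}$ on $(0,(s-r)^\delta)$, getting $\cI_2^2\le N(s-r)^{\Theta(2\delta,c-\delta)}$.

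Since both exponents are $<-1$ by hypothesis, I would finish by integrating in $r$ by parts --- writing $\bM^{2r_1}_x|f|_H^2(r,x)=-\tfrac{d}{dr}\int_r^0\bM^{2r_1}_x|f|_H^2(\bar s,x)\,d\bar s$ and using that, for $r<-8r_2$ and $(t,x)\in Q_{r_2,r_1}$, $\frac1{|r|}\int_r^0\bM^{2r_1}_x|f|_H^2(\bar s,x)\,d\bar s\le 2\,\bM^{6r_2}_t\bM^{2r_1}_x|f|_H^2(t,x)$ (because $|r|>6r_2$ and $(r,0)\subset(t-|r|,t+|r|)$) --- which produces the two bounds $N\,H_{3,K}(\mu)\,r_2^{\Theta(\kappa+\delta,\sigma-\delta)-(\delta-\kappa)\mu+1}\bM^{6r_2}_t\bM^{2r_1}_x|f|_H^2(t,x)$ and $N\,r_2^{\Theta(2\delta,c-\delta)+1}\bM^{6r_2}_t\bM^{2r_1}_x|f|_H^2(t,x)$ for $|D_s\cG f(s,y)|^2$, whence the assertion after taking the supremum over $(s,y)\in Q_{r_2,r_1}$, exactly as at the end of the proof of Lemma \ref{lemma3-2}. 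The hard part is really only the first step: justifying that on $Q_{r_2,r_1}$ the integral defining $\cG f$ has an $s$-independent range, so that differentiating in $s$ leaves no boundary term and the reduction to a convolution against $\tfrac{\partial^2}{\partial x\partial s}K$ is legitimate; everything downstream is word-for-word the argument of Lemma \ref{lemma3-2}.
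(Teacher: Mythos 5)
Your proposal is correct and is essentially the paper's own argument: the paper's proof of Lemma~\ref{lemma4} consists of exactly the observation that, thanks to $f(r,\cdot)\equiv 0$ for $r\ge -8r_2$, the integral defining $\cG f$ on $Q_{r_2,r_1}$ has the $s$-independent range $(-\infty,-8r_2)$, whence Minkowski's inequality bounds $|D_s\cG f|$ by the $L_2(dr)$-norm of $D_sK(s,r,\cdot)\ast f(r,\cdot)(y)$, and the rest is declared to follow Lemma~\ref{lemma3-2} verbatim. Your transcription of the $\cI_1,\cI_2$ split, the use of Lemma~\ref{lemma2} with $R_1=2r_1$, $R_2=0$, and the integration-by-parts step with the maximal-function bound matches the paper's Lemma~\ref{lemma3-2} template exactly, so no further comment is needed.
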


\begin{proof}
The proof of this lemma is quite similar to the previous one. Note that by Minkowski's inequality
\begin{align*}
|D_s\cG f(s,y)|
&= \big|D_s[\int_{-\infty}^{-8r_2}  |K(s,r,\cdot) \ast f(r,\cdot)(y)|_H^2 dr]^{1/2}\big| \\
&\leq [\int_{-\infty}^{-8r_2} |D_s K(s,r, \cdot) \ast  f(r,\cdot)(y)|_H^2 dr]^{1/2}.
\end{align*}
The other parts are easily obtained by following the proof of the previous lemma.
\end{proof}

\mysection{Proof of Theorem \ref{maintheorem}}              \label{pfmainthm}

First, observe that  from \eqref{c1c2relation} and \eqref{algebraic condition} we have
\begin{align}                \label{theta1range1}
-2{\sigma_1} + \kappa_1 (\mu_{1}+d) > -1.
\end{align}
Indeed,
\begin{align*}
-2{\sigma_1} + \kappa_1 (\mu_{1}+d)&=\mu_{1}(c_{2}-c_{3}+1)-d(c_{2}-c_{3}+1)-2(c_{2}-c_{3})-3
\\&=(c_{2}-c_{3}+1)(\mu_{1}-d-2)-1>-1,
\end{align*}
since $c_{2}-c_{3}+1 = \frac{2c_{2}-1}{2(d+2)}>0$ and $\mu_{1}>d+2$.

Also, we can derive the following relation from \eqref{c1c2relation} (note that $c_{2}>\frac{1}{2}$)
\begin{align}\label{delta0 equality}
\Theta(2\delta_{0},c_{2}-\delta_{0})=-2\delta_{0}-1=\frac{1-2c_{2}}{d+2}-1<-1
\end{align}
and
\begin{align}                    \label{theta3range1}
\Theta(2\delta_{0},c_{3}-\delta_{0})=\Theta(2\delta_{0},c_{2}-\delta_{0})+2(c_{2}-c_{3})=-3.
\end{align}

\vspace{3mm}

Take $\delta_{0}$  from \eqref{delta01}.  If Assumption \ref{assumption2} holds, then $\delta_0 >0$ due to \eqref{delta0 equality}. For $R>0$ set
$$
Q_R = (-2R,0)\times B_{R^{\delta_{0}}}.
$$

By $\aint_{Q_R}f~dsdy$ we denote the mean average of $f$ on $Q_R$, i.e.
$$
\aint_{Q_R} f~dsdy := \frac{1}{|Q_R|} \int_{Q_R} f(s,y)~dsdy.
$$
Recall
\begin{align*}
\cG f (t,x):=  \Big(\int_{-\infty}^t \big|K(t,s, \cdot) \ast f(s,\cdot)(x)\big|_H^2 ds\Big)^{1/2}.
\end{align*}

To continue the proof we need the following lemma.

\begin{lemma}               \label{lemma5}
Suppose that Assumption \ref{assumption1} and \ref{assumption2} hold. Then  for any $(t,x) \in Q_R$
\begin{align*}
\frac{1}{|Q_R|^2}\int_{Q_R} \int_{Q_R}\big| \cG f(s,y)-\cG f(r,z)\big|^2~dsdydrdz \leq N\bM_t\bM_{x} |f|_H^2(t,x),
\end{align*}
where the constant $N$ is independent of $f$, $R$, and $(t,x)$.
\end{lemma}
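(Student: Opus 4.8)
The plan is to bound the oscillation $\aint_{Q_R}\aint_{Q_R}|\cG f(s,y)-\cG f(r,z)|^2\,dsdy\,drdz$ by splitting $f$ into a ``near'' part and a ``far'' part (in both the spatial and the time variables), and to handle the near part by the $L_2$-estimate of Lemma~\ref{lemma1} (via Corollary~\ref{cor1}) and the far part by the Poincar\'e-type inequality applied to the smoothness estimates of Lemmas~\ref{lemma3-2} and \ref{lemma4}. Concretely, write $f = f_1 + f_2 + f_3$ where $f_1 := f\,I_{(-10R,10R)\times B_{2R^{\delta_0}}}$ (near in space and time), $f_2 := f\,I_{(-10R,10R)\times (\fR^d\setminus B_{2R^{\delta_0}})}$ (far in space, near in time), and $f_3 := f\,I_{t\le -8R}$ (more precisely $f_3 := f - f\,I_{t>-8R}$, far in time). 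One must choose the cutoff radii so that they are consistent with the scaling $r_1 = r_2^{\delta_0}$ built into $Q_R$ — that is, take $r_2 = R$ and $r_1 = R^{\delta_0}$, which is exactly the normalization under which Lemmas~\ref{lemma3}, \ref{lemma3-2}, \ref{lemma4} were stated with $\delta = \delta_0$, $\sigma = \sigma_i$, $\kappa = \kappa_i$ and $c = c_2$ or $c = c_3$.

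For $f_1$: since $\cG$ is sublinear in $f$ (it is an $L_2(ds)$-norm of a linear operator, so the triangle/Minkowski inequality gives $|\cG f(s,y) - \cG g(s,y)| \le \cG(f-g)(s,y)$), it suffices to control each of the three pieces separately and add. For the near piece $\cG f_1$, bound the double integral over $Q_R\times Q_R$ crudely by $2|Q_R|\int_{Q_R}|\cG f_1|^2$, then apply Corollary~\ref{cor1} (with $r_1 = R^{\delta_0}$ rescaled appropriately, using $B_{3R^{\delta_0}}\subset B_{2R^{\delta_0}}\cdot\text{const}$ — one actually enlarges the cutoff from $2R^{\delta_0}$ to $3R^{\delta_0}$ at the outset) to get $\int_{Q_R}|\cG f_1|^2 \le N\int_{-\infty}^0\int_{B_{3R^{\delta_0}}}|f|_H^2 \le N|Q_R|\,\bM_t\bM_x|f|_H^2(t,x)$, using that $(t,x)\in Q_R$ and that the relevant space-time cube is comparable to $Q_R$. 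For $f_2$ (far in space, supported in time in $(-10R,10R)$): bound $|\cG f_2(s,y) - \cG f_2(r,z)|$ by the oscillation of $\cG f_2$, which by the mean value theorem is controlled by $(\mathrm{diam}\,Q_R)\cdot\sup_{Q_R}(|\nabla_x\cG f_2| + |D_t\cG f_2|)$; then invoke Lemma~\ref{lemma3-2} and Lemma~\ref{lemma4} with the $\wedge$-bounds, whose exponents $\Theta(\kappa_i+\delta_0,\sigma_i-\delta_0) - (\delta_0-\kappa_i)\mu_i + 1$ and $\Theta(2\delta_0, c_i - \delta_0)+1$ are the key quantities; the algebraic system \eqref{algebraic condition}, equivalently \eqref{matrix}, together with \eqref{theta1range1}, \eqref{delta0 equality}, \eqref{theta3range1} is precisely what makes these exponents combine with the factor $(\mathrm{diam}\,Q_R)^2 \asymp R^{2\delta_0} + R^2$ to produce exactly $\bM_t\bM_x|f|_H^2(t,x)$ with no leftover power of $R$. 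Wait — I should be careful: $f_2$ is \emph{not} supported in $t\le -8R$, so Lemmas~\ref{lemma3-2}, \ref{lemma4} as stated do not directly apply to it; instead one uses Lemma~\ref{lemma3} for the far-in-space contribution, which only requires $D_xK$ and gives $\int_{Q_R}|\cG f_2|^2 \le N H_{1,K}(\mu_1)\,r_1^{\mu_1} r_2^{-2\sigma_1+\kappa_1(\mu_1+d)+1}\int_{-10R}^0\bM_x^{3r_1}|f|_H^2$, and then one checks via \eqref{theta1range1} and the first line of \eqref{matrix} that the prefactor equals $N|Q_R|$ times a dimensionless constant, so this again bounds by $N\bM_t\bM_x|f|_H^2(t,x)$. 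For $f_3$ (far in time, $f_3$ supported in $\{t\le -8R\}$): here Lemmas~\ref{lemma3-2} and \ref{lemma4} apply directly and give pointwise bounds on $\sup_{Q_R}|\nabla_x\cG f_3|^2$ and $\sup_{Q_R}|D_t\cG f_3|^2$ by $N(\cdots\wedge\cdots)\bM_t^{6r_2}\bM_x^{2r_1}|f|_H^2(t,x)$; multiplying by $(\mathrm{diam}\,Q_R)^2$ and using \eqref{delta0 equality}, \eqref{theta3range1} and \eqref{matrix} to check the resulting powers of $R$ cancel, we obtain the desired bound by $N\bM_t\bM_x|f|_H^2(t,x)$.

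The main obstacle — and the real content — is the bookkeeping of the scaling exponents: one must verify that in each of the three regimes the product of the Poincar\'e factor $R^{2\delta_0}$ (or $R^2$) with the estimates from Lemmas~\ref{lemma3}--\ref{lemma4}, divided by the normalization implicit in averaging over $Q_R$, is a constant independent of $R$. This is exactly where the definition \eqref{c1c2relation} of $c_3$, the system \eqref{algebraic condition}/\eqref{matrix}, and the inequalities \eqref{theta1range1}, \eqref{delta0 equality}, \eqref{theta3range1} are consumed; the choice $\mu_i > d+2$ guarantees the relevant $\rho$-integrals converge and the associated exponents land strictly below $-1$ so that the $r$-integrals $\int_{-\infty}^{-8r_2}|r|^{(\cdots)}\,dr$ converge and scale correctly. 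A secondary point to get right is that one should first replace $f$ by $f\,I_{(-CR,CR)\times B_{CR^{\delta_0}}}$ for a suitable large constant $C$ — this ``localization'' uses the sublinearity of $\cG$ together with the decay of the kernels at infinity to discard the genuinely remote part of $f$, and only after that is the three-way split $f_1+f_2+f_3$ performed — but this is routine given Lemma~\ref{lemma1} and the structure of Assumption~\ref{assumption2}.
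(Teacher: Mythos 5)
Your decomposition $f = f_1 + f_2 + f_3$ (near space/near time, far space/near time, far time) is exactly the paper's $\cA_1,\cA_2,\cB$, and your allocation of estimates — Corollary \ref{cor1} for the near piece, Lemma \ref{lemma3} for the far-in-space/near-in-time piece, Lemmas \ref{lemma3-2} and \ref{lemma4} with a Poincar\'e/mean-value step for the far-in-time piece, then collecting the powers of $R$ via \eqref{matrix}, \eqref{theta1range1}, \eqref{delta0 equality}, \eqref{theta3range1} — matches the paper's proof step for step, including the mid-course self-correction that $f_2$ cannot go through the Poincar\'e route because it is not supported in $\{t\leq -8R\}$. So the approach is essentially the same.

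Two small slips to clean up. First, your cutoffs are sharp indicators, which takes you out of $C^\infty_0(\fR^{d+1},H)$ and therefore outside the hypotheses of Lemmas \ref{lemma1}--\ref{lemma4}; the paper avoids this by using smooth $\zeta$ and $\eta$ (with $\zeta=1$ on $[-8R,8R]$, $\zeta=0$ outside $(-10R,10R)$, and $\eta=1$ on $B_{2R^{\delta_0}}$, $\eta=0$ outside $B_{3R^{\delta_0}}$), together with the algebraic trick $|\cG f - c| \leq \cG\cA + |\cG\cB - c|$. Your inverse-triangle-inequality version $|\cG f - \cG\cB| \leq \cG(f-\cB)$ is an equally valid route to that inequality and does not even need the pointwise bound $\cG\cB \leq \cG f$. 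Second, as written $f_1+f_2+f_3 \neq f$: the time indicators $I_{(-10R,10R)}$ and $I_{t\leq -8R}$ overlap on $(-10R,-8R]$, so the sum equals $2f$ there. Align the time cutoffs (or just use a smooth $\zeta$). Relatedly, the closing remark that one should first replace $f$ by $f\,I_{(-CR,CR)\times B_{CR^{\delta_0}}}$ to ``discard the genuinely remote part'' should be dropped: the far-past part of $f$ cannot be discarded, since $\cG f(s,y)$ for $s\in(-2R,0)$ integrates over all $r<s$; its contribution is precisely what the Poincar\'e step for $f_3$ controls, and you already handled it correctly there.
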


\begin{proof}
Let $(t,x) \in Q_R$. We take a function $\zeta \in C_0^\infty(\fR)$
such that $0\leq \zeta\leq 1$, $\zeta=1$ on $[-8R,8R]$, and $\zeta=0$ outside of $[-10R,10R]$. Define
$$
\cA(s,y) := f(s,y) \zeta(s), \quad
\cB(s,y):=f(s,y)-\cA(s,y)=f(s,y)(1-\zeta(s)).
$$
Then
$$
K(t,s,\cdot) \ast \cA(s,\cdot) = \zeta(s) K(t,s,\cdot) \ast f(s,\cdot),  \quad \cG
f \leq \cG \cA + \cG \cB\quad \text{and} \quad \cG \cB \leq \cG f.
$$ The first inequality comes from Minkowski's inequality. The second inequality comes from the fact $|K(t,s, \cdot) \ast \cB(s,\cdot)(y)|=(1-\zeta(s))|K(t,s, \cdot) \ast f(s,\cdot)(y)|$ and $|1-\zeta(s)| \leq 1$.
So for any constant $c$,
\begin{align}            \label{averaging}
|\cG f -c | \leq |\cG\cA| + |\cG \cB -c|.
\end{align}
This is because if $\cG f  \geq c$, then
$$
|\cG f -c | =\cG f -c  \leq \cG \cA + \cG \cB -c \leq |\cG\cA| + |\cG \cB -c|
$$
and if $\cG f  < c$, then
$$
|\cG f -c | =c- \cG f  \leq c - \cG \cB  \leq |\cG\cA| + |\cG \cB -c|.
$$
First we prove
\begin{align}            \label{localestimate1}
\int_{Q_R}|\cG \cA (s,y)|^2~dsdy \leq N |Q_R| \bM_t\bM_{x}|f|^2_H(t,x).
\end{align}
Take  $\eta \in C_0^\infty (\fR^d)$ such that $0 \leq \eta \leq 1$, $\eta =1$ in
$B_{2R^{\delta_0}}$, and $\eta=0$ outside of $B_{3R^{\delta_0}}$.  Set $\cA_{1} = \eta \cA$ and
$\cA_2 = (1-\eta) \cA$. By Minkowski's inequality, $\cG \cA \leq \cG \cA_1+
\cG \cA_2$. $\cG \cA_1$ can be estimated by Corollary \ref{cor1}.
Indeed,
\begin{align*}
\int_{-2R}^0 \int_{B_{R^{\delta_0}}}|\cG \cA_1(s,y)|^2dsdy
&\leq N  \int_{-\infty}^0 \int_{B_{3R^{\delta_0}}}|\cA_1(s,y)|_H^2dsdy\\
&\leq N  \int_{-10R}^0 \int_{B_{4R^{\delta_0}}(x)}|\cA_1(s,y)|_H^2dsdy\\
&\leq N R^{\delta_0 d} \int_{-10R}^0 \bM_{x}|\cA_1(s,x)|_H^2dsdy\\
&\leq N R^{1+\delta_0 d} \bM_t\bM_{x}|\cA_1(t,x)|_H^2\\
&\leq N R^{1+\delta_0 d} \bM_t\bM_{x}|f(t,x)|_H^2.
\end{align*}
Hence it only remains to show \eqref{localestimate1} for $\cG \cA_2$ instead of $\cG \cA$.

Due to \eqref{theta1range1}, \eqref{theta1range2} holds for $\mu=\mu_1$ and $(\sigma,\kappa)=(\sigma_{1},\kappa_{1})$. Thus from Lemma \ref{lemma3} with $(r_2,r_1)= (R,R^{\delta_0})$ we have
\begin{align*}
\int_{Q_R}|\cG \cA_2(s,y)|^2~dsdy &\leq N R^{\delta_0(2d+2-\mu_{1})-2{\sigma_1} + {\kappa_1}(\mu_{1}+d)+2} \bM_t\bM_{x}|f|^2_H(t,x)
\\&\leq N R^{\Theta(2\delta_{0}+\kappa_{1},\sigma_{1}-\delta_{0})+2-(\delta_{0}-\kappa_{1})\mu_{1}} \bM_t\bM_{x}|f|^2_H(t,x).
\end{align*}
Moreover due to \eqref{matrix} and \eqref{delta01},
$$
\Theta(2\delta_{0}+\kappa_{1},\sigma_{1}-\delta_{0})+2-(\delta_{0}-\kappa_{1})\mu_{1}=\Theta(\delta_{0},0)+1= \delta_0d+1
$$
and so \eqref{localestimate1} is obtained.
To go further, recall \eqref{matrix} and \eqref{delta0 equality},
$$\Theta(\kappa_{2}+\delta_{0},\sigma_{2}-\delta_{0})-(\delta_{0}-\kappa_{2})\mu_{2}=\Theta(2\delta_{0},c_{2}-\delta_{0})=-2\delta_{0}-1<-1$$
so \eqref{theta2range2} holds with $\mu = \mu_2$ and $(\sigma,\kappa,c)=(\sigma_{2},\kappa_{2},c_{2})$. Hence applying Lemma \ref{lemma3-2} with $(r_2,r_1)=(R,R^{\delta_0})$,
\begin{align*}
\notag &\sup_{Q_{R}}  |\nabla \cG \cB|^2 \\
&\leq N \Big( R^{\Theta(\kappa_{2}+\delta_{0},\sigma_{2}-\delta_{0})-(\delta_{0}-\kappa_{2})\mu_{2}+1} \wedge R^{\Theta(2\delta_{0},c_{2}-\delta_{0})+1}\Big)\bM_t \bM_{x} |\cB|_H^2(t,x)
\\&\leq NR^{-2\delta_{0}}\bM_t \bM_{x} |\cB|_H^2(t,x).
\end{align*}
Hence
\begin{align}           \label{localestimate2}
\sup_{Q_{R}}  |R^{\delta_0}  \nabla \cG \cB|^2 \leq N\bM_t \bM_{x} |\cB|_H^2(t,x).
\end{align}
Similarly Lemma \ref{lemma4} with $(r_2,r_1)=(R,R^{\delta_0})$, $(\mu,\delta,\sigma,\kappa,c)=(\mu_{3},\delta_{0},\sigma_{3},\kappa_{3},c_{3})$ gives
\begin{align}               \label{localestimate3}
\sup_{Q_{R}}  |R  \frac{\partial}{\partial t} (\cG \cB)|^2 \leq N\bM_t \bM_{x} |\cB|_H^2(t,x).
\end{align}
To apply Lemma \ref{lemma3-2} and Lemma \ref{lemma4} above we used the fact that $\cG \cB(s,y) = \cG (I_{(-\infty,0)}\cB)(s,y)$ on $Q_R$. Next by  \eqref{averaging},
\begin{align*}
\frac{1}{|Q_R|^2}\int_{Q_R} &\int_{Q_R}\big| \cG f(s,y)-\cG f(r,z)\big|^2~dsdydrdz \\
&\leq 2 \dashint_{Q}|\cG f-c|^2~dsdy \leq 4 \dashint_{Q} |\cG \cA|^2~dsdy + 4\dashint_{Q} |\cG \cB - c |^2~dsdy.
\end{align*}
Taking $c=\cG \cB(t,x)$, from \eqref{localestimate1}, \eqref{localestimate2}, and \eqref{localestimate3}  we get
\begin{align*}
\frac{1}{|Q_R|^2}\int_{Q_R} &\int_{Q_R}\big| \cG f(s,y)-\cG f(r,z)\big|^2~dsdydrdz \\
&\leq 4\dashint_{Q_R} |\cG \cA|^2~dsdy + 4\dashint_{Q_R} |\cG \cB - \cG \cB (t,x) |^2~dsdy\\
&\leq N \bM_t \bM_{x} |f|_H^2 (t,x)+4\dashint_{Q_R} |\cG \cB - \cG \cB (t,x) |^2~dsdy \\
&\leq N \bM_t \bM_{x} |f|_H^2 (t,x)+N\sup_{Q_R} \big(|RD_s\cG \cB|^2 + |R^{\delta_0}\nabla_\cG \cB|^2\big)\\
&\leq N \bM_t \bM_{x} |f|_H^2 (t,x).
\end{align*}
The lemma is proved.
\end{proof}

We continue the proof of the theorem.
For  measurable functions $h(t,x)$ on $\fR^{d+1}$, we define the sharp function $h^\sharp(t,x)$
\begin{align*}
h^\sharp(t,x)
= \sup_{Q }  \frac{1}{|Q|}  \int_{Q} |f(r,z) - f_{Q}|~drdz,
\end{align*}
where $f_{Q} := \frac{1}{|Q|}  \int_{Q} f(r,z)~drdz$, and the sup is taken all $Q$ containing $(t,x)$ of the type
$$
Q= (s-R,s+R) \times B_{R^{\delta_0}}(y), \quad  R>0.
$$

By Fefferman-Stein Theorem \cite[Theorem 4.2.2]{Ste}, for any $h\in L_p(\fR^{d+1})$,
$$
\|h\|_{L_p(\fR^{d+1})} \leq N \|h^{\sharp}\|_{L_p(\fR^{d+1})}.
$$

\vspace{3mm}

Now we claim
\begin{equation}
                        \label{eqn 6.08.9}
  (\cG f)^{\sharp}(t,x)\leq N (\bM_t\bM_{x}|f|^2_{H})^{1/2}(t,x).
  \end{equation}
By Jensen's inequality, to prove (\ref{eqn 6.08.9}) it suffices to
prove that for each $Q \in \cQ$ and $(t,x)\in Q$,
$$
\dashint_Q |\cG f- (\cG f)_Q|^2~dyds \leq
N \bM_t\bM_{x}|f|_H^2(t,x).
$$

Note that for any $h_1 \in \fR$ and $h_2 \in \fR^d$,
\begin{align*}
\cG f(t-h_1,x-h_2)
=\cG \tilde f (t,x)
=\Big(\int_{-\infty}^t \big|\tilde K(t,s, \cdot) \ast \tilde f(s,\cdot)(x)\big|_H^2 ds \Big)^{1/2},
\end{align*}
where $\tilde f (t,x) = f(t-h_1, x-h_2)$ and $\tilde K (t,s,y)= K(t-h_{1},s-h_{1},y)$.
Since $\tilde K$ also satisfies Assumptions \ref{assumption1} and \ref{assumption2} with the same constnats, we may assume
$Q=[-2R,0] \times B_{R^{\delta_0 }}$.
Thus Lemma \ref{lemma5} proves (\ref{eqn 6.08.9}) because
$$
\dashint_Q |\cG f- (\cG f)_Q|^2~dyds \leq \frac{1}{|Q_R|^2}\int_{Q_R} \int_{Q_R}\big| \cG f (s,y)- \cG f (r,z)\big|^2~dsdydrdz.
$$

Finally, combining the Fefferman-Stein theorem and Hardy-Littlewood
maximal theorem \cite[Theorem 1.3.1]{Ste}, we conclude (recall
$p/2
>1$)
\begin{align*}
\|u\|_{L_p(\fR^{d+1})}^p \leq N \|(\bM_t\bM_{x}|f|_H^2)^{1/2}\|_{L_p(\fR^{d+1})}^p
&=N \int_{\fR^d} \int_\fR (\bM_t \bM_{x} |f|_H^2)^{p/2}dtdx \\
&\leq N \int_{\fR^d}  \int_\fR (\bM_{x} |f|_H^2 )^{p/2}dtdx \\
&= N \int_{\fR}  \int_{\fR^d} (\bM_{x} |f|_H^2 )^{p/2}dxdt \\
&\leq  N \|f\|_{L_p(\fR^{d+1},H)}^p.
\end{align*}
Therefore, the theorem is proved. \qed

\mysection{ Proof of Theorem \ref{pseudothm} }
                                        \label{pseudothmpf}
Denote
$$
K(t,s,x)=(-\Delta)^{\gamma/4} p(t,s,x) = I_{0 \leq s<t }\cF^{-1} \Big(|\xi|^{\gamma/2} \exp\big\{\int^t_s \psi(r,\xi)dr\big\} \Big)(x).
 $$
We prove that Assumptions \ref{assumption1} and \ref{assumption2} hold with

$$
F_{1}(t,s,x) =I_{0 \leq s<t} \sum_{i}\left|\cF^{-1} \Big( \xi^i |\xi|^{\gamma/2} \exp\big\{M(t,s,\xi)\big\} \Big)(x)\right|,
$$
$$
F_{2}(t,s,x) = I_{0 \leq s<t }\sum_{i,j}\left|\cF^{-1} \Big( \xi^i \xi^j |\xi|^{\gamma/2} \exp\big\{M(t,s,\xi)\big\} \Big)(x)\right|,
$$
and
\begin{align*}
F_{3}(t,s,x)
= I_{0 \leq s<t }\sum_i\left|\cF^{-1} \Big( (t-s)\psi\Big(t,\frac{\xi}{(t-s)^{1/\gamma}}\Big) \xi^i |\xi|^{\gamma/2} \exp\big\{M(t,s,\xi)\big\} \Big)(x)\right|,
\end{align*}
where $
{M}(t,s,\xi):=\int_s^t \psi\Big(r,\frac{\xi}{(t-s)^{1/\gamma}}\Big) dr.
$

 In the the  following lemma  we  first prove (\ref{as1})-(\ref{as3-2}) with
 $$
 \kappa_1=\kappa_2=\kappa_3=\gamma^{-1}, \quad \sigma_1=\frac{d+1}{\gamma}+\frac{1}{2}, \quad \sigma_2=c_2=\frac{d+2}{\gamma}+\frac{1}{2}, \quad \sigma_3=c_3=\frac{d+1}{\gamma}+\frac{3}{2}.
 $$
\begin{lemma}           \label{confirmas1}
There exists a constant $N=N(d,\gamma,\nu)>0$ such that
\begin{align*}
\int_0^\infty \Big| \cF \Big(K(t,s,\cdot) \Big)(\xi)\Big|^2 dt < N,
\end{align*}
\begin{align*}
\big|D_x K(t,s,x) \big| \leq N (t-s)^{-\frac{d}{\gamma} - \frac{1}{2} -\frac{1}{\gamma}} \Big(|F_{1}(t,s,x)| \wedge 1 \Big),
\end{align*}
\begin{align*}
\big|D^2_x
K(t,s,x) \big|
\leq N(t-s)^{-\frac{d}{\gamma} - \frac{1}{2} -\frac{2}{\gamma}} \Big(|F_{2}(t,s,x)| \wedge 1 \Big) ,
\end{align*}
and
\begin{align*}
\big|\frac{\partial}{ \partial t }D_xK(t,s,x) \big|
\leq N  (t-s)^{-\frac{d}{\gamma} -\frac{3}{2} -\frac{1}{\gamma}}\Big(|F_{3}(t,s,x)| \wedge 1 \Big).
\end{align*}
\end{lemma}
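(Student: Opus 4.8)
The plan is to reduce each of the four claims to the rescaled symbol $M(t,s,\xi)$ by a parabolic change of the frequency variable, and then read everything off from \eqref{A1}--\eqref{A2}. The first claim (which is \eqref{as1}) is immediate: since $K(t,s,\cdot)=0$ unless $0\le s<t$, for $s<t$ we have $\cF\{K(t,s,\cdot)\}(\xi)=|\xi|^{\gamma/2}\exp(\int_s^t\psi(r,\xi)\,dr)$, so by \eqref{A1}
\[
|\cF\{K(t,s,\cdot)\}(\xi)|=|\xi|^{\gamma/2}\exp\Big(\int_s^t\Re\psi(r,\xi)\,dr\Big)\le|\xi|^{\gamma/2}e^{-\nu(t-s)|\xi|^{\gamma}},
\]
whence $\int_0^\infty|\cF\{K(t,s,\cdot)\}(\xi)|^2\,dt\le\int_s^\infty|\xi|^{\gamma}e^{-2\nu(t-s)|\xi|^{\gamma}}\,dt=\frac1{2\nu}$, which is independent of $(s,\xi)$; thus Assumption \ref{assumption1} holds with $C_0=\frac1{2\nu}$.

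For the other three bounds, put $\tau=t-s$ and substitute $\xi=\tau^{-1/\gamma}\eta$ in the inverse Fourier integrals, which converge absolutely by the exponential bound above. Using $|\tau^{-1/\gamma}\eta|^{\gamma/2}=\tau^{-1/2}|\eta|^{\gamma/2}$ and $\int_s^t\psi(r,\tau^{-1/\gamma}\eta)\,dr=M(t,s,\eta)$, one obtains the exact scaling identities
\[
D_{x^i}K(t,s,x)=i\,\tau^{-\frac d\gamma-\frac12-\frac1\gamma}\,\cF^{-1}\!\big(\eta^i|\eta|^{\gamma/2}e^{M(t,s,\eta)}\big)\big(\tau^{-1/\gamma}x\big),
\]
and likewise for $D^2_xK$ (with a factor $\eta^i\eta^j$ and power $\tau^{-\frac d\gamma-\frac12-\frac2\gamma}$), and --- after differentiating the exponential in $t$, which is legitimate for a.e.\ $t$ by the fundamental theorem of calculus ($\psi(\cdot,\xi)$ being measurable) and may be carried inside $\cF^{-1}$ by dominated convergence --- for $\partial_tD_{x^i}K$, where the extra factor $\psi(t,\xi)=\tau^{-1}[\tau\,\psi(t,\tau^{-1/\gamma}\eta)]$ produces exactly the symbol defining $F_3$ together with one more power $\tau^{-1}$ (total power $\tau^{-\frac d\gamma-\frac32-\frac1\gamma}$). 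Taking absolute values and summing over components, these identities are precisely \eqref{as2}--\eqref{as3-2} with $\kappa_i=\gamma^{-1}$ and the stated $\sigma_i$, the argument of $F_i$ being $\tau^{-1/\gamma}x=(t-s)^{-\kappa_i}x$.

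It remains to supply the bounds coming from the minimum in \eqref{as3-1}--\eqref{as3-2}; since here $\sigma_2=c_2$ and $\sigma_3=c_3$, these reduce to showing that $F_i$ is bounded uniformly (so $|F_i|\le N(|F_i|\wedge1)$) together with the trivial bounds $|D_xK|,|D^2_xK|,|\partial_tD_xK|\le N\tau^{-\sigma_i}$. The key point is that the rescaling makes the exponential decay scale-free: by \eqref{A1},
\[
\Re M(t,s,\eta)=\int_s^t\Re\psi\big(r,\tau^{-1/\gamma}\eta\big)\,dr\le-\nu\,\tau\,|\tau^{-1/\gamma}\eta|^{\gamma}=-\nu|\eta|^{\gamma},
\]
so $|e^{M(t,s,\eta)}|\le e^{-\nu|\eta|^{\gamma}}$ with $\nu$ independent of $(s,t)$, while \eqref{A2} with $\alpha=0$ gives $|\tau\,\psi(t,\tau^{-1/\gamma}\eta)|\le\nu^{-1}|\eta|^{\gamma}$. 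Hence each of the three symbols above is dominated, uniformly in $(s,t)$, by a fixed $L_1(\fR^d)$ function of the form $|\eta|^{a}e^{-\nu|\eta|^{\gamma}}$ (with $a=1+\frac\gamma2$, $2+\frac\gamma2$, $1+\frac{3\gamma}2$ respectively), so $\|\cF^{-1}(\cdot)\|_{L_\infty}\le(2\pi)^{-d}\|\cdot\|_{L_1}\le N(d,\gamma,\nu)$; this bounds $F_i$, and through the scaling identities it also yields the $\tau^{-\sigma_i}$ bounds. Combining this with the previous paragraph proves the lemma. The argument is in essence careful bookkeeping of the scaling exponents plus the two elementary estimates on $M$ displayed above; the only mildly delicate point is the interchange $\partial_t\cF^{-1}=\cF^{-1}\partial_t$, which is covered by the very same uniform domination, so there is no genuine analytic obstacle.
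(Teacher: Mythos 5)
Your proposal is correct and follows essentially the same route as the paper: prove the $L_2(dt)$ bound directly from \eqref{A1}, then obtain the pointwise bounds by the parabolic rescaling $\xi\mapsto(t-s)^{-1/\gamma}\eta$ in the inverse Fourier integral, which yields the exact scaling identities with $F_i$ at the argument $(t-s)^{-1/\gamma}x$ together with the prefactor $(t-s)^{-\sigma_i}$, and finally bound $F_i$ uniformly via $\Re M(t,s,\eta)\le-\nu|\eta|^\gamma$, \eqref{A2} with $\alpha=0$, and the elementary $\|\cF^{-1}g\|_{L_\infty}\le(2\pi)^{-d}\|g\|_{L_1}$ estimate. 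The paper writes out only the fourth inequality (``because of the similarity''), while you spell out the scaling computation and the justification for interchanging $\partial_t$ with $\cF^{-1}$; these are harmless elaborations, not a different argument.
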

\begin{proof}
The first assertion comes from (\ref{A1}). Indeed, since $\Re \psi(t,\xi) \leq - \nu |\xi|^{\gamma}$,
\begin{align*}
&\int_0^\infty \Big| \cF \Big(K(t,s,\cdot) \Big)(\xi)\Big|^2 dt=\int^{\infty}_s \left||\xi|^{\gamma/2}  \exp\big\{\int^t_s \psi(r,\xi)dr\big\}\right|^2 dt\\
&\leq N\int_0^\infty |\xi|^{\gamma} e^{-2\nu t|\xi|^{\gamma}} dt \leq N.
\end{align*}
Next because of the similarity, we only prove the last assertion. From the definition of $K(t,s,x)$ and $M(t,s,\xi)$,
\begin{align*}
&\Big|\frac{\partial}{ \partial x^i }\frac{\partial}{ \partial t}K(t,s,x) \Big|
= I_{0 \leq s<t } \Big|\cF^{-1} \Big( \psi(t,\xi) \xi^i |\xi|^{\gamma/2}  \exp\big(\int_s^t \psi(r,\xi) dr \big) \Big)(x) \Big| \\
&= I_{0 \leq s<t } (t-s)^{-\frac{d}{\gamma} -\frac{3}{2} -\frac{1}{\gamma}} \\
& \quad \quad \quad \Big|\cF^{-1} \Big( (t-s)\psi\Big(t,\frac{\xi}{(t-s)^{1/\gamma}}\Big) \xi^i |\xi|^{\gamma/2} \exp\big\{M(t,s,\xi)\big\} \Big)\Big(\frac{x}{(t-s)^{1/\gamma}}\Big)\Big|\\
&\leq (t-s)^{-\frac{d}{\gamma} -\frac{3}{2} -\frac{1}{\gamma}}F_3(t,s,(t-s)^{-1/{\gamma}}x).
\end{align*}
Furthermore, by   (\ref{A1}) and (\ref{A2}),
\begin{align*}
&\Big|\cF^{-1} \Big( (t-s)\psi\Big(t,\frac{\xi}{(t-s)^{1/\gamma}}\Big) \xi^i |\xi|^{\gamma/2} \exp\big\{M(t,s,\xi)\big\} \Big)(x)\Big| \\
&\leq N \int_{\fR^d} \Big| (t-s)\psi\Big(t,\frac{\xi}{(t-s)^{1/\gamma}}\Big) \xi^i |\xi|^{\gamma/2}  \exp\big(\int_s^t \psi(r,\frac{\xi}{(t-s)^{1/\gamma}}) dr \big) \Big|~d\xi\\
&\leq N \int_{\fR^d}  |\xi|^{\frac{3\gamma}{2} +1}  \exp\big(-\nu |\xi|^\gamma \big) ~d\xi \leq N.
\end{align*}
Hence the assertion is proved.
\end{proof}

\begin{lemma}   \label{eta1}
Let   $h \in C^2(\fR^d \setminus \{0\})$ satisfy
\begin{align*}
|h(x)| \leq N_0|x|^\varsigma e^{-c|x|^\gamma},  \quad \forall x\in\fR^{d} \setminus \{0\},
\end{align*}
with some    constants $c, N_0>0$, $\varsigma > \eta-\frac{d}{2}$ and $\gamma > 0$. Further assume that either
\begin{align*}
 \eta\in [0,1)\quad \text{and}\quad \big|D h(x) \big| \leq N_0|x|^{\varsigma-1} e^{-c|x|^\gamma},  \quad \forall x \in \fR^{d} \setminus \{0\}
\end{align*}
 or
 \begin{equation*}
\eta\in [1,2) \quad \text{and}\quad \big|D^2h(x) \big| \leq N_0|x|^{\varsigma-2} e^{-c|x|^\gamma},  \quad \forall x \in \fR^{d} \setminus \{0\}
\end{equation*}
 holds.
 Then
$$
\|(-\Delta)^{\eta/2}h\|_{L_2(\fR^d)} < N< \infty,
$$
where  $N=N(N_0,\eta,c,\varsigma,\gamma)$.
\end{lemma}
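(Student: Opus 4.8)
The plan is to recast the claim as $h\in H^\eta(\fR^d)$. Since $(-\Delta)^{\eta/2}$ has symbol $|\xi|^\eta$, Plancherel's identity gives $\|(-\Delta)^{\eta/2}h\|_{L_2(\fR^d)}^2=(2\pi)^{-d}\int_{\fR^d}|\xi|^{2\eta}|\cF h(\xi)|^2\,d\xi$, and $h$ itself lies in $L_2$ because $|h|\le N_0|x|^\varsigma e^{-c|x|^\gamma}$ with $2\varsigma>-d$ (as $\varsigma>\eta-d/2\ge-d/2$) and $\gamma>0$. The only delicate point is the interaction of large $|\xi|$ with the possible singularity of $h$ at the origin; for $|x|$ bounded away from $0$ the function $h$ is genuinely $C^2$ with all derivatives up to order $2$ dominated by a power times $e^{-c|x|^\gamma}$, so that part causes no trouble.

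\textbf{Dyadic decomposition.} I would fix a smooth dyadic partition of unity $\{\chi_j\}_{j\in\bZ}$ of $\fR^d\setminus\{0\}$, with $\chi_j$ supported in the annulus $A_j=\{2^j\le|x|\le 2^{j+1}\}$ and $|D^\alpha\chi_j|\le N 2^{-j|\alpha|}$, and set $g_j:=h\chi_j$, so $h=\sum_{j\in\bZ}g_j$ with the partial sums converging to $h$ in $L_2$. On $A_j$ one has $|x|^\varsigma\le N 2^{j\varsigma}$ and $e^{-c|x|^\gamma}\le E_j:=e^{-c2^{j\gamma}}$, so combining the hypotheses on $h$, $Dh$ (and $D^2h$ when $\eta\in[1,2)$) with the bounds on $\chi_j$,
\begin{align*}
|g_j|\le NN_0E_j2^{j\varsigma},\qquad |Dg_j|\le NN_0E_j2^{j(\varsigma-1)},\qquad |D^2g_j|\le NN_0E_j2^{j(\varsigma-2)}\ \ (\eta\in[1,2)).
\end{align*}
Writing $g_j(x)=N_0E_j2^{j\varsigma}\,\widetilde g_j(2^{-j}x)$, the rescaled functions $\widetilde g_j$ are supported in the fixed annulus $\{1\le|y|\le 2\}$ and have $C^1$-norm (resp.\ $C^2$-norm when $\eta\ge1$) bounded by an absolute constant; being compactly supported they satisfy $\|\widetilde g_j\|_{H^\eta}\le N$ uniformly in $j$, using $H^1\hookrightarrow H^\eta$ for $\eta\in[0,1)$ and $H^2\hookrightarrow H^\eta$ for $\eta\in[1,2)$, and in particular $\|(-\Delta)^{\eta/2}\widetilde g_j\|_{L_2}\le N$ uniformly.

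\textbf{Scaling and summation.} Next I would use the scaling law of the fractional Laplacian: if $g(x)=a\,\widetilde g(bx)$ with $a,b>0$, a change of variables on the Fourier side gives $\|(-\Delta)^{\eta/2}g\|_{L_2}=a\,b^{\eta-d/2}\,\|(-\Delta)^{\eta/2}\widetilde g\|_{L_2}$. Applying this with $a=N_0E_j2^{j\varsigma}$, $b=2^{-j}$ yields
\begin{align*}
\|(-\Delta)^{\eta/2}g_j\|_{L_2}\le NN_0\,E_j\,2^{j(\varsigma+d/2-\eta)} .
\end{align*}
Since $\varsigma>\eta-d/2$, the exponent $\varsigma+d/2-\eta$ is strictly positive, so $\sum_{j\le 0}2^{j(\varsigma+d/2-\eta)}<\infty$, while for $j\ge 1$ the factor $E_j$ decays faster than any power of $2^j$; hence $\sum_{j\in\bZ}\|(-\Delta)^{\eta/2}g_j\|_{L_2}\le NN_0\sum_{j\in\bZ}E_j2^{j(\varsigma+d/2-\eta)}<\infty$, the last sum depending only on $d,\eta,\varsigma,\gamma,c$. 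As $(-\Delta)^{\eta/2}$ is continuous from $H^\eta$ (or from $\mathcal S'$) to $L_2$, the series $\sum_j(-\Delta)^{\eta/2}g_j$ converges in $L_2$ to $(-\Delta)^{\eta/2}h$, and the triangle inequality gives $\|(-\Delta)^{\eta/2}h\|_{L_2}\le N(N_0,\eta,c,\varsigma,\gamma)<\infty$. The case $\eta=0$ is just the bound $\|h\|_{L_2}<\infty$ already noted.

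\textbf{Main obstacle.} The genuinely fractional regime $\eta-d/2<\varsigma\le 1-d/2$ (respectively $\le 2-d/2$) is the crux: there $h$ need not lie in $H^1$ (respectively $H^2$) near the origin, so a naive estimate of $\cF h$ by integration by parts stalls at decay $|\cF h(\xi)|\lesssim|\xi|^{-1}$ (respectively $|\xi|^{-2}$), which is too weak once $\eta$ approaches the threshold. The dyadic rescaling is precisely what converts the pointwise decay of $h$ and its derivatives into the sharp $H^\eta$ membership, and bookkeeping of the single exponent $\varsigma+d/2-\eta$ is what forces the hypothesis $\varsigma>\eta-d/2$. For $\eta\in(0,1)$ one could alternatively estimate the Gagliardo seminorm $\int\!\int|h(x)-h(y)|^2|x-y|^{-d-2\eta}\,dx\,dy$ directly, splitting according to whether $|x-y|$ is small or large relative to $\max(|x|,|y|)$ and using $|h(x)-h(y)|\le|x-y|\sup_{[x,y]}|Dh|$ in the first case, but the dyadic argument treats all $\eta\in[0,2)$ uniformly.
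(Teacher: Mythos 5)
The paper itself offers no proof of this lemma: it simply refers to Lemma~5.1 of the companion paper \cite{IKS} by (a subset of) the same authors, so there is nothing in this text to compare your argument against. Your dyadic-annulus proof is correct and self-contained. Localizing $h$ to the annuli $\{2^{j}\le|x|\le2^{j+1}\}$, rescaling each piece $g_{j}$ to the fixed annulus $\{1\le|y|\le2\}$, and observing that compactly supported functions with uniformly bounded $C^{1}$-norm (resp.\ $C^{2}$-norm when $\eta\in[1,2)$) form a bounded set in $H^{\eta}$, you get $\|(-\Delta)^{\eta/2}g_{j}\|_{L_{2}}\le N N_{0}\,e^{-c2^{j\gamma}}\,2^{j(\varsigma+d/2-\eta)}$; the positive exponent $\varsigma+d/2-\eta>0$ (precisely the standing hypothesis) controls $j\to-\infty$ while the superexponential factor controls $j\to+\infty$. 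In a polished version you should state two small points explicitly rather than leave them implicit: (i) $\sum_{j}g_{j}\to h$ in $L_{2}$ by dominated convergence against $|h|$, which lies in $L_{2}$ since $2\varsigma>-d$; and (ii) the identification of $\lim_{M,N\to\infty}\sum_{j=-M}^{N}(-\Delta)^{\eta/2}g_{j}$ with $(-\Delta)^{\eta/2}h$ rests on the closedness of $(-\Delta)^{\eta/2}$ as an unbounded operator on $L_{2}$ (equivalently, continuity of $|\xi|^{\eta}\hat{h}$ under $L_{2}$-limits of $\hat h$), which your remark about $\cS'$ gestures at but should be pinned down. Neither is a gap --- both are routine --- and the dependence of the final constant only on $N_{0},\eta,c,\varsigma,\gamma$ (and $d$) is visible at every step.
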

\begin{proof}
See \cite[Lemma 5.1]{IKS}.
\end{proof}

\begin{corollary}           \label{confirmas2}
 Suppose
$$
\begin{cases}
&2\lfloor\frac{\mu}{4}\rfloor +1 \leq \lfloor \frac{d}{2} \rfloor+2 \quad \text{if} \quad \frac{\mu}{2} - 2\lfloor \frac{\mu}{4} \rfloor \in [0,1)\\
&2\lfloor\frac{\mu}{4}\rfloor +2 \leq \lfloor \frac{d}{2} \rfloor+2 \quad \text{if} \quad \frac{\mu}{2} - 2\lfloor \frac{\mu}{4} \rfloor \in [1,2).
\end{cases}
$$
Then,
\begin{align*}
&\sup_{s<t}\int_{\fR^d} |x|^{\mu} |F_1(t,s,x)|^2 dx  < \infty, \quad \text{if} \quad  \mu <  \gamma+d+2;\\
&\sup_{s<t}\int_{\fR^d} |x|^{\mu} |F_2(t,s,x)|^2 dx  < \infty, \quad \text{if}\quad  \mu <  (\gamma+d+4);\\
& \sup_{s<t}\int_{\fR^d} |x|^{\mu} |F_3(t,s,x)|^2 dx  < \infty,  \quad \text{if}\quad \mu < (3\gamma+d+2).
\end{align*}
\end{corollary}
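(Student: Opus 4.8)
The plan is to deduce each of the three weighted $L_2$-bounds on $F_i$ from Lemma \ref{eta1}, applied componentwise. First I would reduce the weighted integral $\int_{\fR^d}|x|^\mu|F_i(t,s,x)|^2\,dx$ to an $L_2$-estimate of a fractional-Laplacian-type object. Write $|x|^\mu = ||x|^{\mu/2}|^2$ and recall the standard identity $\| |x|^{\mu/2} g\|_{L_2} \sim \|(-\Delta)^{\mu/4} \hat g\|_{L_2}$ up to constants; more precisely, since multiplication by $|x|^{\mu/2}$ on the physical side corresponds (up to a constant and after taking $\lfloor\mu/4\rfloor$ integer pairs of derivatives plus a fractional remainder $\eta := \mu/2 - 2\lfloor\mu/4\rfloor \in [0,2)$) to applying $(-\Delta)^{\eta/2}$ to something that already carries $2\lfloor\mu/4\rfloor$ powers of $|\xi|$, I would set, for the $F_1$ case and each fixed index $i$,
\begin{align*}
h(x) = h_{t,s,i}(x) := \cF^{-1}\Big( \xi^i |\xi|^{\gamma/2} |\xi|^{2\lfloor\mu/4\rfloor}\exp\{M(t,s,\xi)\}\Big)(x),
\end{align*}
so that $|x|^{\mu/2}\,\cF^{-1}(\xi^i|\xi|^{\gamma/2}\exp\{M\})$ is, up to constants, $(-\Delta)^{\eta/2}h$ (the extra integer powers of $|\xi|^2$ are absorbed as $\lfloor\mu/4\rfloor$ applications of $-\Delta$, which on the physical side is multiplication by a polynomial of degree $2\lfloor\mu/4\rfloor$ in $x$; that is exactly the role of the first $2\lfloor\mu/4\rfloor$ powers of $|x|$). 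Then $\int |x|^\mu|F_1|^2\,dx \leq N\sum_i \|(-\Delta)^{\eta/2}h_{t,s,i}\|_{L_2}^2$.

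Next I would verify the hypotheses of Lemma \ref{eta1} for $h = h_{t,s,i}$, uniformly in $s<t$. Using \eqref{A1}, $\Re M(t,s,\xi) = \int_s^t \Re\psi(r,\xi/(t-s)^{1/\gamma})\,dr \leq -\nu (t-s)\,|\xi|^\gamma/(t-s) = -\nu|\xi|^\gamma$, so $\exp\{M\}$ decays like $e^{-\nu|\xi|^\gamma}$ in $\xi$; differentiating under the integral and using \eqref{A2} to bound $D^\alpha M$ by $N|\xi|^{\gamma-|\alpha|}$ for $|\alpha|\le \lfloor d/2\rfloor+2$, one gets that $\xi\mapsto \xi^i|\xi|^{\gamma/2}|\xi|^{2\lfloor\mu/4\rfloor}\exp\{M(t,s,\xi)\}$ and its derivatives up to order $\lfloor d/2\rfloor+2$ are all dominated by $N|\xi|^{(\text{appropriate power})}e^{-\nu|\xi|^\gamma/2}$ with constants independent of $s,t$. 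Taking inverse Fourier transforms and integrating by parts (i.e., estimating $|x|^k h(x)$ by the $L_1$-norm in $\xi$ of the $k$-th derivative of the integrand) yields the decay bounds $|h(x)|\leq N_0|x|^\varsigma e^{-c|x|^\gamma}$ and the corresponding first/second derivative bounds required by Lemma \ref{eta1}, where $\varsigma$ is read off from the Schwartz-type decay — the point is that $h$ decays faster than any polynomial, so any $\varsigma>\eta-d/2$ works. The precise condition $2\lfloor\mu/4\rfloor+1\le\lfloor d/2\rfloor+2$ (resp. $+2$) in the statement is exactly what guarantees the number of derivatives of $\psi$ we are allowed to take in \eqref{A2} — namely $\lfloor d/2\rfloor+2$ — suffices to produce the $C^1$ (resp. $C^2$) regularity of $h$ away from the origin that Lemma \ref{eta1} demands after we have spent $2\lfloor\mu/4\rfloor$ of them realizing the integer powers $|\xi|^{2\lfloor\mu/4\rfloor}$. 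With these verified, Lemma \ref{eta1} gives $\|(-\Delta)^{\eta/2}h_{t,s,i}\|_{L_2}\le N$ uniformly in $s<t$, proving the first bound under $\mu<\gamma+d+2$ (this range is where $\varsigma = \gamma/2 + 1 + 2\lfloor\mu/4\rfloor - d/2$ still exceeds $\eta - d/2$, equivalently where the total homogeneity $\gamma/2+1$ beats $\mu/2 - d/2$ in the sense needed).

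The $F_2$ and $F_3$ cases are identical in structure: for $F_2$ replace $\xi^i|\xi|^{\gamma/2}$ by $\xi^i\xi^j|\xi|^{\gamma/2}$, which raises the effective homogeneity by one, giving the threshold $\mu<\gamma+d+4$; for $F_3$ replace it by $(t-s)\psi(t,\xi/(t-s)^{1/\gamma})\,\xi^i|\xi|^{\gamma/2}$, and here \eqref{A2} controls the extra factor $(t-s)\psi(t,\xi/(t-s)^{1/\gamma})$ and its derivatives (up to order $\lfloor d/2\rfloor+2$) by $N|\xi|^{\gamma}$ times negative powers of $|\xi|$, contributing homogeneity $\gamma$ more than a bare $\xi^i|\xi|^{\gamma/2}$ and hence the threshold $\mu<3\gamma+d+2$. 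I expect the main obstacle to be purely bookkeeping: carefully tracking which derivatives land on $\exp\{M\}$ versus on the polynomial/$\psi$ prefactor so that the total number of $\xi$-derivatives never exceeds $\lfloor d/2\rfloor+2$ (this is where assumptions \eqref{A1}–\eqline{A2} and the stated numerical constraint on $\mu$ interact), and confirming the decay exponent $\varsigma$ genuinely satisfies $\varsigma>\eta-d/2$ in each of the three stated ranges — but since $\exp\{M\}$ forces faster-than-polynomial decay of $h$, once the regularity count works out the application of Lemma \ref{eta1} is immediate.
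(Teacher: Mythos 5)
There is a genuine gap, and it is a Fourier-duality confusion that makes the central step fail. You want to control $\int |x|^\mu |F_1|^2\,dx$, which by Parseval equals (up to constants) $\int |(-\Delta_\xi)^{\mu/4}\,\cF F_1(\xi)|^2\,d\xi$: multiplying by $|x|^{\mu/2}$ on the physical side is dual to the $\xi$-fractional Laplacian $(-\Delta_\xi)^{\mu/4}$ acting on the symbol. Your proposal instead defines $h(x)=\cF^{-1}\bigl(\xi^i|\xi|^{\gamma/2}|\xi|^{2\lfloor\mu/4\rfloor}\exp\{M\}\bigr)(x)$ and asserts that $|x|^{\mu/2}g_i(x)$ is, up to constants, $(-\Delta_x)^{\eta/2}h(x)$. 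That is false: since $h=(-\Delta_x)^{\lfloor\mu/4\rfloor}g_i$, one has $(-\Delta_x)^{\eta/2}h=(-\Delta_x)^{\eta/2+\lfloor\mu/4\rfloor}g_i$, which has nothing to do with $|x|^{\mu/2}g_i$. The parenthetical "$(-\Delta)$ on the physical side is multiplication by a polynomial of degree $2\lfloor\mu/4\rfloor$ in $x$" is the same confusion (it is a differential operator, not a multiplication). Put differently: you have put the integer powers of $|\xi|$ on the wrong side of the duality; they should become $\xi$-derivatives of $\hat g_i$, not multipliers in the symbol.

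A second, independent obstruction: even granting the reduction, Lemma \ref{eta1} needs $|h(x)|\le N_0|x|^\varsigma e^{-c|x|^\gamma}$, i.e.\ sub-exponential decay. But $h(x)$ is the inverse Fourier transform of a symbol decaying like $e^{-\nu|\xi|^\gamma}$ with limited regularity at $\xi=0$ (only $\lfloor d/2\rfloor+2$ derivatives of $\psi$ are controlled by \eqref{A2}, and $|\xi|^{\gamma/2}$ is not smooth at $0$ for general $\gamma$). Such an $h$ decays only polynomially in $x$ when $\gamma\neq 2$, so the hypotheses of Lemma \ref{eta1} are not met on the physical side, and "any $\varsigma>\eta-d/2$ works" is not available.

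The paper avoids both problems by applying Lemma \ref{eta1} on the Fourier side: it sets $h(\xi)=(-\Delta_\xi)^{\lfloor\mu/4\rfloor}\tilde F_i(t,s,\xi)$, a function of $\xi$, and verifies $|h(\xi)|\le N|\xi|^{\varsigma}e^{-\nu|\xi|^\gamma}$ together with the needed $C^1$/$C^2$ bounds in $\xi$ directly from \eqref{A1} and \eqref{A2}. There the sub-exponential decay in the variable $\xi$ is exactly what \eqref{A1} provides, the exponent $\varsigma$ is read off the homogeneity of the multiplier at $\xi=0$, and the constraint $2\lfloor\mu/4\rfloor+1$ (or $+2$) $\le\lfloor d/2\rfloor+2$ counts $\xi$-derivatives spent. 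Your numerology (the three thresholds and the derivative-count constraint) is correct, and the broad plan "Parseval, then Lemma \ref{eta1}" is the right one; the fix is to carry out the reduction on the $\xi$-side rather than trying to realize the weight $|x|^{\mu/2}$ as a physical-space fractional Laplacian.
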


\begin{proof}
Because of the similarity of proofs, we only prove the last assertion.
By Parseval's identity, it suffices to show
\begin{align*}
\sup_{s<t}\int_{\fR^d} \Big|(-\Delta)^{\mu/4} \tilde{F}^i_3(t,s,\xi)\Big|^2 d\xi  < \infty, \quad \forall\, i,
\end{align*}
where
\begin{align*}
\tilde{F}^i_3(t,s,\cdot)\big)(\xi)
= I_{0 \leq s<t}(t-s)\psi\Big(t,\frac{\xi}{(t-s)^{1/\gamma}}\Big) \xi^i |\xi|^{\gamma/2} \exp\big\{M(t,s,\xi)\big\} .
\end{align*}
 Using (\ref{A1}) and (\ref{A2}), one can check that there exists a constant $N=N(\nu,m)$ such for each $0<s<t$, $\xi \neq 0$, and $\mu < \lfloor \frac{d}{2} \rfloor +2$
$$
|(-\Delta)^{\lfloor\mu/4\rfloor} \tilde{F}^i_3(s,t,\xi)| \leq N |\xi|^{\frac{3\gamma}{2}+1 - 2\lfloor\mu/4\rfloor}e^{-\nu |\xi|^{\gamma}}.
$$
Moreover
$$
|\frac{\partial^2}{\partial \xi^i }(-\Delta)^{\lfloor\mu/4\rfloor}\tilde F_3^i(s,t,\xi)| \leq N |\xi|^{\frac{3\gamma}{2} - 2\lfloor\mu/4\rfloor}e^{-\nu |\xi|^{\gamma}},
$$
if $\frac{\mu}{4} - \lfloor \frac{\mu}{4} \rfloor \in [0,1)$,
and
$$
|\frac{\partial^2}{\partial \xi^i \partial \xi^j}(-\Delta)^{\lfloor\mu/4\rfloor}\tilde F_3^i(s,t,\xi)| \leq N |\xi|^{\frac{3\gamma}{2}-1 - 2\lfloor\mu/4\rfloor}e^{-\nu |\xi|^{\gamma}}.
$$
if $\frac{\mu}{4} - \lfloor \frac{\mu}{4} \rfloor \in [1,2)$.
 Finally we set
 $$\eta = \mu/2-2\lfloor\mu/4\rfloor, \quad \varsigma =\frac{3\gamma}{2}+1-2\lfloor\mu/4\rfloor.
 $$
 Then, for   $ \mu < 3\gamma+d+2$,  we have
$$
\eta-\frac{d}{2}< \varsigma.
$$
Therefore Lemma \ref{eta1} is applicable, and the assertion is proved.
\end{proof}

We continue the proof of the theorem. Recall that we defined
$$
{\kappa_1}={\kappa_2}={\kappa_3}= \frac{1}{\gamma}, \quad {\sigma_1}= \frac{d}{\gamma}+\frac{1}{2}+\frac{1}{\gamma},
$$
$$
{c_{2}}={\sigma_2}=\frac{d}{\gamma}+\frac{1}{2}+\frac{2}{\gamma}, \quad {c_{3}}={\sigma_3}=\frac{d}{\gamma}+\frac{3}{2}+\frac{1}{\gamma}.
$$
So obviously
\begin{align*}
\delta_{0}={c_{2}}-c_{3}+1= \frac{1}{\gamma},\quad c_{2}>\frac{1}{2},
\end{align*}

\begin{align*}
\Theta(\kappa_{1}+\delta_{0},\sigma_{1}-\delta_{0})+1 = \frac{2d}{\gamma}-2\left(\frac{d}{\gamma}+\frac{1}{2}\right)+1=0,
\end{align*}

\begin{align*}
\Theta(\kappa_{2}-\delta_{0},\sigma_{2}-c_{2})=\Theta(0,0)=0,
\end{align*}
and
\begin{align*}
\Theta(\kappa_{3}-\delta_{0},\sigma_{3}-c_{3})=\Theta(0,0)=0.
\end{align*}
Thus \eqref{algebraic condition} (or equivalently (\ref{matrix})) is satisfied for any $(\mu_1,\mu_2,\mu_3)\in \fR^3$.
Next we choose $(\mu_{1},\mu_{2},\mu_{3})$ such that
$$
d+2 <\mu_1 < \gamma+d+2,
$$
$$
d+2 <\mu_2< \gamma+d+4,
$$
and
$$
d+2 <\mu_3< 3\gamma+d+2
$$
so that for all $1 \leq i \leq 3$
$$
\begin{cases}
&2\lfloor\frac{\mu_i}{4}\rfloor +1 \leq \lfloor \frac{d}{2} \rfloor+2 \quad \text{if} \quad \frac{\mu_i}{2} - 2\lfloor \frac{\mu_i}{4} \rfloor \in [0,1)\\
&2\lfloor\frac{\mu_i}{4}\rfloor +2 \leq \lfloor \frac{d}{2} \rfloor+2 \quad \text{if} \quad \frac{\mu_i}{2} - 2\lfloor \frac{\mu_i}{4} \rfloor \in [1,2).
\end{cases}
$$
Then due to Corollary \ref{confirmas2}, we see that
\eqref{finite1}, \eqref{finite2}, and \eqref{finite3} hold for these $\mu_1$, $\mu_2$, and $\mu_3$ hence Assumption \ref{assumption2} holds. The theorem is proved.

\end{document}